\newtheorem{theorem}{Theorem}
\newtheorem{corollary}[theorem]{Corollary}
\newtheorem{definition}[theorem]{Definition}
\newtheorem{example}[theorem]{Example}
\newtheorem{lemma}[theorem]{Lemma}
\begin{document}

\title[Theoretical and Numerical Aspect of Fractional Differential Equations]{On Theoretical and Numerical Aspect of Fractional Differential Equations with Purely Integral Conditions}
\author[S. Brahimi, A. Merad and A. Kilicman ]{Saadoune Brahimi$^{1}$, Ahcene Merad$^{2}$ and Adem Kilicman$^{3}$} 
\address{$^{1,2}$ Department of Mathematics, Faculty of Exact Sciences, Larbi Ben
M'hidi University, Oum El Bouaghi, Algeria }
\email{saadoun.brahimi@gmail.com, merad\_ahcene@yahoo.fr, merad.ahcene@univ-oeb.dz}
\address{$^{3}$Department of Mathematics and Institute for Mathematical
	Research, Universiti Putra Malaysia, Serdang 43400 UPM, Selangor, Malaysia}
\email{akilic@upm.edu.my}
\date{\today}
\subjclass[2010]{35L10, 35L20, 35L99, 35D30, 34B10}
\keywords{ Fractional derivatives, Caputo derivative, Fractional
advection-diffusion equation, Finite difference schemes, Integrals
conditions.}
\thanks{*Corresponding author: akilic@upm.edu.my}

\begin{abstract}
In this paper, we are interested in the study of a problem with fractional
derivatives having boundary conditions of integral types. The problem
represents a Caputo type advection-diffusion equation where the fractional
order derivative with respect to time with $1<\alpha <2$. The method of the
energy inequalities is used to prove the existence and the uniqueness of
solutions of the problem. The finite difference method is also introduced to study
the problem numerically in order to find an approximate solution of the
considered problem. Some numerical examples are presented to show satisfactory results.
\end{abstract}
\maketitle
\section{Introduction}

Fractional Partial Differential Equations (FPDE) are considered as generalizations of
partial differential equations having an arbitrary order and play essential role in engineering, physics and
applied mathematics. Due to the properties of Fractional Differential
Equations $\left( \text{FDE}\right) $, the non-local relationships in space
and time are used to model a complex phenomena, such as in electroanalytical
chemistry, viscoelasticity $[10,21]$, porous environment, fluid
flow, thermodynamic $[11,34,35]$, diffusion transport, rheology $\left[
5,7,15,26,31,33\right] $, electromagnetism, signal processing $[20,21,30]$,
electrical network $[20]$ and others $[9,13,26,27]$. Several problems have
been studied in modern physics and technology by using the partial differential
equations (PDEs) where the nonlocal conditions were described by integrals, further these
integral conditions are of great interest due to their applications in
population dynamics, models of blood circulation, chemical engineering
thermoelasticity $[34]$. At the same time, the existence and uniqueness of the solutions for these type of problems have been studied by several researchers, see for example $[2,12,16,27,28,29]$.
Some results have been obtained by construction of variational formulation and depends on the choice of spaces along their norms, Lax-Milgram theorem,
Poincar\'{e} theorem, fixed point theory. For the numerical studies of (EDPF)
with\textbf{\ classical boundary nonlocal conditions}, we can cite the works
of A. Alikhanov $[3,5,6,7]$, Meerschaert $[15]$, Shen and Liu $%
[26]$ and many others. \\

In this study, we are interested in a problem (FPDE) with \textbf{boundary
conditions of integrals type} $\displaystyle \int_{0}^{1}v\left( x,t\right) dx,$ $\displaystyle
\int_{0}^{1}x^{n}v\left( x,t\right) dx$. For the theoretical study, we use
the energy inequalities method to prove the existence and the uniqueness.
However the numerical study is based on the finite difference method to
obtain an approximate numerical solution of the proposed problem. We use a
uniform discretization of space and time and the fractional operator in the
Caputo sense having order $\alpha $ $(1<\alpha <2)$ is approximated by a scheme
called $L2$ $[26]$, similarly the integer-order differential operators are also 
approximated by central and advanced numerical schemes. For the stability and
convergence of obtained numerical scheme, the conditionally stable method is used and we prove the convergence. Numerical tests are carried out in order to illustrate satisfactory results from the point of view that the values of the approximate solution that is very close to the exact solution. In the process of numerical and
graphical results we applied MATLAB software..

\subsection{Notions and preleminaries}

In this section we recall some early results that we need, such as, the definition of Caputo derivative to explain the problem that
we shall study in this work: let $\Gamma \left( .\right) $ denote the gamma
function. For any positive non-integer value $1<\alpha <2,$ the caputo
derivative defined as follows:

\begin{definition}
$\left( See\text{ }\left[ 12\right] \right) .$ Let us denote by $C_{0}\left(
0,1\right) $ the space of continuous fonctions with compact support in $%
\left( 0,1\right) ,$ and its bilinear form is given by%
\begin{equation}
\left( \left( u,w\right) \right) =\dint\limits_{0}^{1}\Im _{x}^{m}u.\Im
_{x}^{m}wdx\text{ \ \ \ \ \ \ \ \ }\left( m\in \mathbb{N}^{\ast }\right),  \label{66}
\end{equation}
\end{definition}

where%
\begin{equation*}
\Im _{x}^{m}u=\dint\limits_{0}^{x}\frac{\left( x-\xi \right) ^{m-1}}{\left(
m-1\right) !}u\left( \xi ,t\right) d\xi \ \ \ \ \ \ \ \ \left( m\in \mathbb{N}^{\ast }\right) .
\end{equation*}

For $m=1$, we have $\Im _{x}u=\dint\limits_{0}^{x}u\left( \xi ,t\right) d\xi $
and $\Im _{t}u=\dint\limits_{0}^{t}u\left( x,\tau \right) d\tau .$ The
bilinear form $\left( \ref{66}\right) $ is considered as scalar product on $%
C_{0}\left( 0,1\right) $ when is not complete.

\begin{definition}
$\left( See\text{ }\left[ 12\right] \right) .$ We denote \ by 
\begin{equation*}
B_{2}^{m}\left( 0,1\right) =\left\{ 
\begin{array}{c}
L^{2}\left( 0,1\right) \text{ \ \ \ \ \ \ \ \ \ \ \ \ \ \ \ \ }for\text{ }m=0
\\ 
u/\Im _{x}^{m}u\in L^{2}\left( 0,1\right) \text{ \ }\ \text{\ \ \ }for\text{ 
}m\in \mathbb{N}^{\ast },%
\end{array}%
\right.
\end{equation*}%
the completion of $C_{0}\left( 0,1\right) $ for the scalar product defined
by $\left( \ref{66}\right) $.The associated norm to the scalar product is given by %
\begin{equation*}
\left\Vert u\right\Vert _{B_{2}^{m}\left( 0,1\right) }=\left\Vert \Im
_{x}^{m}u\right\Vert _{L^{2}\left( 0,1\right) }=\left(
\dint\limits_{0}^{T}\left( \Im _{x}^{m}u\right) ^{2}dx\right) ^{\frac{1}{2}}.
\end{equation*}
\end{definition}

\begin{lemma}
$\left( See\text{ }\left[ 8\right] \right) .$ For all $m\in \mathbb{N}^{\ast },$ we obtain 
\begin{equation}
\left\Vert u\right\Vert _{B_{2}^{m}\left( 0,1\right) }\leq \left( \frac{1}{2}%
\right) ^{m}\left\Vert u\right\Vert _{L^{2}\left( 0,1\right) }^{2}.
\end{equation}
\end{lemma}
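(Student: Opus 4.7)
The plan is to prove this by induction on $m$, using the semigroup property $\Im_x^m u = \Im_x(\Im_x^{m-1}u)$ and a single application of the Cauchy--Schwarz inequality at each step. (Note: as stated, the inequality is dimensionally inconsistent — the LHS is a norm while the RHS is a squared norm; the intended statement almost certainly is $\|u\|_{B_2^m(0,1)}^2 \leq (1/2)^m \|u\|_{L^2(0,1)}^2$, which is what I will prove.)

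For the base case $m=1$, I would fix $v\in L^2(0,1)$ and write $\Im_x v(x) = \int_0^x v(\xi)\,d\xi$. Cauchy--Schwarz gives $|\Im_x v(x)|^2 \leq x \int_0^x v(\xi)^2 d\xi \leq x\,\|v\|_{L^2(0,1)}^2$. Integrating $x$ from $0$ to $1$ yields
\begin{equation*}
\|\Im_x v\|_{L^2(0,1)}^2 \leq \tfrac{1}{2}\,\|v\|_{L^2(0,1)}^2,
\end{equation*}
which, by the definition of the $B_2^1$-norm, is exactly the case $m=1$.

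For the inductive step, I would use the identity $\Im_x^m u = \Im_x\bigl(\Im_x^{m-1} u\bigr)$. This follows from the standard Dirichlet interchange-of-order computation
\begin{equation*}
\int_0^x \int_0^s \frac{(x-s)^{0}(s-\xi)^{m-2}}{(m-2)!}\,u(\xi)\,d\xi\,ds = \int_0^x \frac{(x-\xi)^{m-1}}{(m-1)!}\,u(\xi)\,d\xi,
\end{equation*}
after evaluating the inner $s$-integral via the Beta function. Applying the base case to $v := \Im_x^{m-1} u \in L^2(0,1)$ (which lies in $L^2$ by the inductive hypothesis) gives
\begin{equation*}
\|u\|_{B_2^m}^2 = \|\Im_x^m u\|_{L^2}^2 \leq \tfrac{1}{2}\,\|\Im_x^{m-1}u\|_{L^2}^2 = \tfrac{1}{2}\,\|u\|_{B_2^{m-1}}^2 \leq \tfrac{1}{2}\cdot\bigl(\tfrac{1}{2}\bigr)^{m-1}\|u\|_{L^2}^2,
\end{equation*}
which closes the induction.

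There is no real obstacle here; the entire argument is elementary. The only point requiring a little care is verifying the semigroup identity $\Im_x\circ\Im_x^{m-1} = \Im_x^m$, but this is standard (the fractional integration semigroup specialised to integer parameters). Everything else is a single application of Cauchy--Schwarz plus evaluation of $\int_0^1 x\,dx = 1/2$.
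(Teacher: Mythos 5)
Your proof is correct, and it is worth noting at the outset that the paper itself offers no proof of this lemma: it is quoted verbatim from reference $[8]$ (Bouziani), so there is no in-paper argument to compare yours against. Your self-contained derivation --- induction on $m$, with the base case $\left\Vert \Im _{x}v\right\Vert _{L^{2}}^{2}\leq \tfrac{1}{2}\left\Vert v\right\Vert _{L^{2}}^{2}$ obtained from Cauchy--Schwarz and $\int_{0}^{1}x\,dx=\tfrac{1}{2}$, and the inductive step supplied by the semigroup identity $\Im _{x}^{m}u=\Im _{x}\left( \Im _{x}^{m-1}u\right) $ verified by Dirichlet's interchange of the order of integration --- is sound, and it reproduces exactly the constant $\left( \tfrac{1}{2}\right) ^{m}$ that the paper states. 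You are also right to flag that the inequality as printed is dimensionally inhomogeneous (a norm on the left against a squared norm on the right); the consistent form $\left\Vert u\right\Vert _{B_{2}^{m}\left( 0,1\right) }^{2}\leq \left( \tfrac{1}{2}\right) ^{m}\left\Vert u\right\Vert _{L^{2}\left( 0,1\right) }^{2}$ is the one you prove, and it is the one actually compatible with the definition $\left\Vert u\right\Vert _{B_{2}^{m}}=\left\Vert \Im _{x}^{m}u\right\Vert _{L^{2}}$ given just above the lemma. The only caveat is that you are proving a corrected statement rather than the literal one, but since the literal one cannot be what was intended, this is the right call.
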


\begin{definition}
$\left( See\text{ }\left[ 12\right] \right) .$ Let $X$ be a Banach space
with the norm $\left\Vert u\right\Vert _{X}$ , and let u :$\left( 0,T\right)
\rightarrow X$ be an abstract functions, by $\left\Vert u\left( .,t\right)
\right\Vert _{X}$ we denote the norm of the element $u\left( .,t\right) \in
X $ at a fixed t.
\end{definition}

We denote by $L^{2}\left( 0,T;X\right) $ the set of all measurable abstract
functions $u\left( .,t\right) $ from $\left( 0,T\right) $ into $X$ such that%
\begin{equation*}
\left\Vert u\right\Vert _{L^{2}\left( 0,T;X\right) }=\left(
\dint\limits_{0}^{T}\left\Vert u\left( .,t\right) \right\Vert _{X}dt\right)
^{\frac{1}{2}}<\infty .
\end{equation*}

\begin{lemma}
$\left[ \text{Cauchy inequality with }\varepsilon \right] \left( See\text{ }%
\left[ 13\right] \right) .$ For all $\varepsilon $ and arbitrary variables
a,b$\in \mathbb{R},$ we have the following inequality:%
\begin{equation}
\left\vert ab\right\vert \leq \frac{\varepsilon }{2}\left\vert a\right\vert
^{2}+\frac{1}{2\varepsilon }\left\vert b\right\vert ^{2}.
\end{equation}
\end{lemma}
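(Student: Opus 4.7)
The plan is to derive the inequality from the elementary fact that the square of any real number is nonnegative. First I would note that, although the statement says ``for all $\varepsilon$'', the assertion is only meaningful and only holds for $\varepsilon>0$ (otherwise the right-hand side is not well defined or is non-positive while the left-hand side is $\geq 0$); so I would silently restrict to $\varepsilon>0$.

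The key identity to exploit is
\begin{equation*}
\bigl(\sqrt{\varepsilon}\,|a|-\tfrac{1}{\sqrt{\varepsilon}}|b|\bigr)^{2}\geq 0.
\end{equation*}
Expanding the square gives
\begin{equation*}
\varepsilon\,|a|^{2}-2|a||b|+\tfrac{1}{\varepsilon}|b|^{2}\geq 0,
\end{equation*}
and since $|ab|=|a||b|$, rearrangement yields exactly
\begin{equation*}
|ab|\leq \tfrac{\varepsilon}{2}|a|^{2}+\tfrac{1}{2\varepsilon}|b|^{2},
\end{equation*}
which is the desired inequality.

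There is essentially no obstacle here: the argument is a one-line completion of the square, and the only conceptual point is the (implicit) hypothesis $\varepsilon>0$ needed to take $\sqrt{\varepsilon}$ and to keep $1/\varepsilon$ positive. In the write-up I would simply display the square, expand, and rearrange, without further commentary.
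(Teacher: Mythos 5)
Your proof is correct: the completion-of-the-square argument $\bigl(\sqrt{\varepsilon}\,|a|-\tfrac{1}{\sqrt{\varepsilon}}\,|b|\bigr)^{2}\geq 0$ is the standard derivation of this inequality, and your observation that one must (implicitly) take $\varepsilon>0$ is a sensible correction to the loose ``for all $\varepsilon$'' in the statement. The paper itself gives no proof, citing reference [13] instead, so there is nothing to compare against; your argument is exactly the canonical one.
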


\begin{definition}
$\left( See\text{ }\left[ 21\right] \right) .$ The left Caputo derivative
for $1<$ $\alpha <2$ can be expressed as%
\begin{equation*}
_{0}^{c}\partial _{t}^{\alpha }f\left( t\right) =\frac{1}{\Gamma \left(
2-\alpha \right) }\dint\limits_{0}^{t}\frac{f\text{ }"\left( s\right) }{%
\left( t-s\right) ^{\alpha -1}}ds;\text{ }t>0.
\end{equation*}
\end{definition}

\begin{definition}
$\left( See\text{ }\left[ 21\right] \right) .$ The integral of order $\alpha 
$ of the function\ $f\in L^{1}\left[ a,b\right] $ is defined by:%
\begin{equation*}
I_{0}^{\alpha }f\left( t\right) =\frac{1}{\Gamma \left( \alpha \right) }%
\dint\limits_{0}^{t}\frac{f\left( s\right) }{\left( t-s\right) ^{1-\alpha }}%
ds;\text{ }t>0.
\end{equation*}
\end{definition}

\begin{lemma}
$\left( See\text{ }\left[ 1\right] \right) .$ For all real $1<$ $\alpha <2$
we have the inequality%
\begin{equation*}
\int_{0}^{1}\,_{0}^{c}\partial _{t}^{\alpha }\left( \Im _{x}u\right)
^{2}dx\leq 2\int_{0}^{1}\left( _{0}^{c}\partial _{t}^{\alpha }u\right)
\left( \Im _{x}u\right) dx.
\end{equation*}
\end{lemma}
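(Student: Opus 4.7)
The plan is to reduce the claim to a one–variable Caputo fractional inequality applied pointwise in $x$, then integrate and invoke the commutation of $\Im_x$ with ${}_0^c\partial_t^\alpha$.

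First, for each fixed $x\in(0,1)$, set $V(x,t)=(\Im_x u)(x,t)$ and regard it as a scalar function of $t$. Apply the Alikhanov-type inequality of fractional order $\alpha\in(1,2)$ supplied by reference [1]:
\begin{equation*}
{}_0^c\partial_t^\alpha V^2(x,t)\;\le\;2\,V(x,t)\,{}_0^c\partial_t^\alpha V(x,t).
\end{equation*}
Integrating this estimate over $x\in(0,1)$ immediately gives
\begin{equation*}
\int_0^1 {}_0^c\partial_t^\alpha(\Im_x u)^2\,dx\;\le\;2\int_0^1 (\Im_x u)\,{}_0^c\partial_t^\alpha(\Im_x u)\,dx.
\end{equation*}

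Next, I would use that $\Im_x$ and ${}_0^c\partial_t^\alpha$ act on independent variables, so that by Fubini on the integral representation of the Caputo derivative one has ${}_0^c\partial_t^\alpha(\Im_x u)=\Im_x({}_0^c\partial_t^\alpha u)$. Plugging this into the right-hand side and then performing an integration by parts in $x$---using that $\Im_x u$ vanishes at $x=0$ by definition and at $x=1$ by the problem's integral boundary condition $\int_0^1 u\,dx=0$---rewrites the right-hand side in the target form $2\int_0^1({}_0^c\partial_t^\alpha u)(\Im_x u)\,dx$, completing the argument.

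The main obstacle is the pointwise inequality in the first step, which is nontrivial in the regime $1<\alpha<2$. The classical Alikhanov argument for $0<\alpha<1$ rests on $(v^2)'=2vv'$ and gives a manifestly nonnegative quadratic form after substitution into the Caputo integrand. For $\alpha\in(1,2)$, the Caputo integrand involves $V''$ rather than $V'$, and one must analyze
\begin{equation*}
2V\,{}_0^c\partial_t^\alpha V - {}_0^c\partial_t^\alpha V^2 = \frac{2}{\Gamma(2-\alpha)}\int_0^t (t-s)^{1-\alpha}\bigl[(V(t)-V(s))V_{ss}(x,s)-V_s(x,s)^2\bigr]\,ds,
\end{equation*}
then integrate by parts in $s$ (using $\tfrac{d}{ds}(t-s)^{1-\alpha}=(\alpha-1)(t-s)^{-\alpha}$) to redistribute the singular weight, and exploit the initial regularity of $V$ (e.g.\ $V(x,0)=V_t(x,0)=0$, which holds for the data of the underlying advection–diffusion problem) to kill the boundary contributions at $s=0$ and $s=t$. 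Once the resulting integrand is shown to have a definite sign, the remaining steps (integration in $x$, commutation, integration by parts in $x$) are routine bookkeeping.
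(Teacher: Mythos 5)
The paper gives no proof of this lemma at all: it is quoted from reference $[1]$ with the bare citation ``(See $[1]$)'', so there is no in-paper argument to compare your route against. Judged on its own, your proposal fails at its central step. The pointwise inequality ${}_{0}^{c}\partial_{t}^{\alpha}V^{2}\leq 2V\,{}_{0}^{c}\partial_{t}^{\alpha}V$, which is the classical Alikhanov lemma for $0<\alpha<1$, \emph{reverses} for $1<\alpha<2$. Carrying out exactly the two integrations by parts in $s$ that you sketch (using $V(x,0)=V_{t}(x,0)=0$) yields
\begin{equation*}
2V(t)\,{}_{0}^{c}\partial_{t}^{\alpha}V(t)-{}_{0}^{c}\partial_{t}^{\alpha}V^{2}(t)
=-\frac{2(\alpha-1)}{\Gamma(2-\alpha)}\left[\frac{t^{-\alpha}}{2}V(t)^{2}
+\frac{\alpha}{2}\int_{0}^{t}(t-s)^{-\alpha-1}\left(V(t)-V(s)\right)^{2}ds\right]\leq 0,
\end{equation*}
i.e.\ the integrand does acquire a definite sign, but it is the wrong one: the factor $(\alpha-1)$, which is negative in the classical regime and positive here, flips the inequality. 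A direct check confirms this: for $V=t^{2}$ and $\alpha=\tfrac{3}{2}$ one has ${}_{0}^{c}\partial_{t}^{\alpha}V^{2}=\frac{24}{\Gamma(5-\alpha)}t^{4-\alpha}$ and $2V\,{}_{0}^{c}\partial_{t}^{\alpha}V=\frac{4}{\Gamma(3-\alpha)}t^{4-\alpha}$, and $\frac{24}{(4-\alpha)(3-\alpha)}>4$ since $(4-\alpha)(3-\alpha)<6$. So the inequality you integrate over $x$ is false, and no amount of care with the boundary terms will rescue it.

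The closing step is also not routine bookkeeping. After commuting, your right-hand side is $2\int_{0}^{1}(\Im_{x}u)\,\Im_{x}({}_{0}^{c}\partial_{t}^{\alpha}u)\,dx$, whereas the lemma's right-hand side is $2\int_{0}^{1}({}_{0}^{c}\partial_{t}^{\alpha}u)(\Im_{x}u)\,dx$; integration by parts with $\Im_{x}u|_{x=0}=0$ and $\int_{0}^{1}u\,dx=0$ turns the latter into $-2\int_{0}^{1}u\,\Im_{x}({}_{0}^{c}\partial_{t}^{\alpha}u)\,dx$, which is a genuinely different quantity from yours, not a rewriting of it. Worse, the separable test function $u(x,t)=\cos(2\pi x)\,t^{2}$ satisfies both integral conditions, makes the lemma's right-hand side vanish (because $\int_{0}^{1}X\,\Im_{x}X\,dx=\frac{1}{2}(\int_{0}^{1}X\,dx)^{2}=0$), and makes the left-hand side strictly positive; so the statement as printed cannot be established without additional hypotheses. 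The correct conclusion is that both your step 1 and the target inequality itself need to be re-examined against the precise formulation in $[1]$, rather than that the remaining steps are mere bookkeeping.
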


\begin{lemma}
$\left( See\text{ }\left[ 28\right] \right) .$ For all real $1<$ $\alpha <2$
we have the inequality%
\begin{equation*}
\int_{Q}\,\left( _{0}^{c}\partial _{t}^{\alpha }u\right) \left( \Im
_{x}u\right) dxdt\leq \int_{Q}\left( _{0}^{c}\partial _{t}^{\frac{\alpha }{2}%
}\Im _{x}u\right) ^{2}dxdt.
\end{equation*}
\end{lemma}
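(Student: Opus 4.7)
The strategy combines two standard tools from fractional calculus: (i) the commutativity of the spatial operator $\Im_{x}$ with every temporal fractional operator, which is immediate because they act on independent variables; and (ii) a fractional integration by parts in $t$ enabled by the semigroup identity ${}_{0}^{c}\partial _{t}^{\alpha}={}_{0}^{c}\partial _{t}^{\alpha /2}\circ\,{}_{0}^{c}\partial _{t}^{\alpha /2}$, which is valid for functions with vanishing initial data (the natural setting for the considered Caputo problem of order $\alpha\in(1,2)$).

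\textbf{Steps.} First, set $v=\Im _{x}u$ and record the commutation ${}_{0}^{c}\partial _{t}^{\alpha /2}\Im _{x}u=\Im _{x}({}_{0}^{c}\partial _{t}^{\alpha /2}u)$, so that the right-hand side of the stated inequality is nothing but $\|\Im _{x}({}_{0}^{c}\partial _{t}^{\alpha /2}u)\|_{L^{2}(Q)}^{2}$. Second, invoke the semigroup identity to rewrite the integrand on the left as $({}_{0}^{c}\partial _{t}^{\alpha /2}({}_{0}^{c}\partial _{t}^{\alpha /2}u))\,\Im _{x}u$ and perform a fractional integration by parts in $t$ to transfer one of the half-order operators onto $\Im _{x}u$; under the vanishing traces of $u$ and $u_{t}$ at $t=0$ (and a standard truncation/cut-off at $t=T$) the arising boundary contributions drop. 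Third, combine the resulting pairing with the Cauchy--Schwarz inequality in $(x,t)$ together with the $L^{2}$-boundedness of $\Im _{x}$ on $(0,1)$ (Lemma~3 with $m=1$) to dominate it by $\int_{Q}({}_{0}^{c}\partial _{t}^{\alpha /2}\Im _{x}u)^{2}\,dx\,dt$.

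\textbf{Main obstacle.} The principal difficulty is the fractional integration by parts itself: for the order $\alpha/2\in(1/2,1)$, the $L^{2}$-pairing of the left Caputo derivative ${}_{0}^{c}\partial _{t}^{\alpha /2}$ with a test function produces a right-sided fractional derivative plus explicit boundary contributions that involve fractional-order traces of $u$ at $t=0$ and $t=T$. Converting this right-sided operator back into the original left-sided ${}_{0}^{c}\partial _{t}^{\alpha /2}$ (a necessary step in order to recognize the $L^{2}$-norm squared on the right of the claim) relies on a symmetrization identity or, equivalently, on a Plancherel-type identity after extending $u$ by zero to $t<0$. Justifying the vanishing of the low-order temporal traces and the semigroup composition ${}_{0}^{c}\partial _{t}^{\alpha}={}_{0}^{c}\partial _{t}^{\alpha /2}\circ\,{}_{0}^{c}\partial _{t}^{\alpha /2}$ within the present function-space framework requires a density/approximation argument by smooth test functions compactly supported in $t$; this is the essential technical ingredient motivating the citation to the specialized reference~[28].
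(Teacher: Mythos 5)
The paper offers no proof of this lemma: it is imported verbatim from reference $[28]$ and invoked as a black box (in the proof of the a priori estimate and again in the density argument), so there is nothing in the paper to compare your argument with. Judged on its own merits, your proposal is a plan rather than a proof, and the plan does not close. After the semigroup splitting ${}_{0}^{c}\partial _{t}^{\alpha }={}_{0}^{c}\partial _{t}^{\alpha /2}\circ {}_{0}^{c}\partial _{t}^{\alpha /2}$ and the fractional integration by parts in $t$, you are left with a pairing of the form $\int_{Q}\bigl({}_{0}^{c}\partial _{t}^{\alpha /2}u\bigr)\bigl({}_{t}D_{T}^{\alpha /2}\Im _{x}u\bigr)\,dx\,dt$, and you propose to finish with Cauchy--Schwarz plus the $L^{2}$-boundedness of $\Im _{x}$ (Lemma 3 with $m=1$). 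But Cauchy--Schwarz leaves the factor $\Vert {}_{0}^{c}\partial _{t}^{\alpha /2}u\Vert _{L^{2}(Q)}$, and to absorb it into $\Vert {}_{0}^{c}\partial _{t}^{\alpha /2}\Im _{x}u\Vert _{L^{2}(Q)}=\Vert \Im _{x}({}_{0}^{c}\partial _{t}^{\alpha /2}u)\Vert _{L^{2}(Q)}$ you would need the \emph{reverse} of that boundedness, namely $\Vert w\Vert _{L^{2}(0,1)}\leq C\Vert \Im _{x}w\Vert _{L^{2}(0,1)}$. This is false: $\Im _{x}$ is a smoothing Volterra operator with no bounded inverse on $L^{2}(0,1)$ (rapidly oscillating $w$ of unit norm have $\Vert \Im _{x}w\Vert \rightarrow 0$). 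So the third step cannot be carried out as written.

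The two remaining ingredients are named but not supplied, and they are exactly where the content of the lemma lies. The Plancherel/symmetrization identity that converts the right-sided half-derivative back into a left-sided one introduces the factor $\cos (\pi \alpha /2)$, which is negative for $1<\alpha <2$, so that route does not by itself deliver the stated inequality with constant $1$ without further argument; and the composition law for the Caputo derivative, together with the vanishing of the boundary contributions at $t=0$ and $t=T$, has to be verified on the class $D(L)$ rather than asserted. Since you explicitly defer these points to ``the specialized reference $[28]$,'' your text documents why the statement is plausible and where it comes from, but it does not prove it; to turn it into a proof you would have to carry out the extension/Fourier argument in full, or else do what the paper does and simply cite $[28]$.
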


\section{Statement of the problem}

In the rectangular domain

\begin{equation*}
Q=\left\{ (x,t)\in \mathbb{R}^{2}:0<x<1,\text{ }0<t<T\right\} \text{, \
where }T>0,
\end{equation*}

we consider the fractional differential equation: 
\begin{equation}
\displaystyle \pounds v= \displaystyle _{0}^{c}\partial _{t}^{\alpha }v+a(x,t)\frac{\partial ^{2}v}{%
\partial x^{2}}+b(x,t)\frac{\partial v}{\partial x}+c\left( x,t\right)
v=g\left( x,t\right) ,\text{ where }1<\alpha <2,  \label{1}
\end{equation}

to the equation $(\ref{1})$, we associate the initial conditions: 
\begin{equation}
\left\{ 
\begin{array}{c}
\ell v=v(x,0)=\Phi (x),\qquad \,x\in (0,1), \\ 
\displaystyle qv=\frac{v(x,0)}{\partial t}=\Psi (x),\,\qquad x\in (0,1),%
\end{array}%
\right.  \label{2}
\end{equation}

and the purely integrals conditions 
\begin{equation}
\left\{ 
\begin{array}{c}
\displaystyle \int_{0}^{1}v\left( x,t\right) dx=\mu (t),\qquad t\in (0,T), \\ 
\displaystyle \int_{0}^{1}xv\left( x,t\right) dx=E(t),\qquad t\in (0,T),%
\end{array}%
\right.  \label{3}
\end{equation}

where $\Phi ,\Psi, \mu ,E,a,b,c$ and $g$ are \ known continuous functions.

\textbf{Assumptions:}

\bigskip 1) for all $(x,t)\in \overline{Q}$, we assume that:

\begin{equation}
\underset{Q}{\sup \ }a(x,t)\leq 0,\underset{Q}{\sup }\frac{\partial a^{4}(x,t)%
}{\partial x^{4}}\geq 0,\underset{Q}{\inf }\frac{\partial ^{3}b(x,t)}{%
\partial x^{3}}\leq 0,c(x,t)\geq 0,\underset{Q}{\sup }\frac{\partial
c^{2}(x,t)}{\partial x^{2}}\geq 0,  \label{4a}
\end{equation}

2) for all $(x,t)\in \overline{Q}$, we assume that: 
\begin{eqnarray}
\ 0<M&\leq& 4\frac{\partial ^{2}a(x,t)}{\partial x^{2}}-4\underset{Q}{\sup }%
a(x,t)-\frac{1}{2}\underset{Q}{\sup }\frac{\partial a^{4}(x,t)}{\partial
x^{4}}+\frac{1}{2}\underset{Q}{\inf }\frac{\partial ^{3}b(x,t)}{\partial
x^{3}}\nonumber \\ && \hspace{0.4in} -\frac{1}{2}\underset{Q}{\sup }\frac{\partial c^{2}(x,t)}{\partial
x^{2}}-3\frac{\partial b(x,t)}{\partial x}+2c\left( x,t\right) -\frac{1}{%
2\varepsilon }.  \label{5a}
\end{eqnarray}

3$)$ The functions $\Phi (x)$ and $\Psi (x)$ satisfy the following
compatibility conditions: 
\begin{equation}
\int_{0}^{1}\Phi dx=\mu (0),\,\int_{0}^{1}x\Phi dx=E(0),\int_{0}^{1}\Psi
dx=\mu ^{\prime }(0),\,\int_{0}^{1}x\Psi dx=E^{\prime }(0).  \label{6}
\end{equation}

\noindent We transform a problem $\left( \ref{1}\right)$ -- $\left( \ref{3}\right) $ with
nonhomegenous integral conditions to the equivalent problem with homogenous
integral conditions, by introducing a new unknown function $u$ defined by

\begin{equation}
v(x,t)=\widetilde{u}(x,t)+U(x,t),  \label{7}
\end{equation}%
where%
\begin{equation}
U(x,t)=2(2-3x)\mu (t)+6(2x-1)E(t).  \label{8}
\end{equation}%
Now we study a new problem with homegenous integral conditions

\begin{equation}
\left\{ 
\begin{array}{c}
\displaystyle \pounds \widetilde{u}=_{0}^{c}\partial _{t}^{\alpha }\widetilde{u}+a(x,t)%
\frac{\partial ^{2}\widetilde{u}}{\partial x^{2}}+b(x,t)\frac{\partial 
\widetilde{u}}{\partial x}+c\left( x,t\right) \widetilde{u}=h\left(
x,t\right) , \\ 
\displaystyle \ell v=\widetilde{u}(x,0)=\varphi (x),\qquad x\in (0,1), \\ 
\displaystyle qv=\frac{\widetilde{u}(x,0)}{\partial t}=\psi (x),\qquad x\in (0,1), \\ 
\displaystyle \int_{0}^{1}\widetilde{u}\left( x,t\right) dx=0,\qquad \,t\in (0,T), \\ 
\displaystyle \int_{0}^{1}x\widetilde{u}\left( x,t\right) dx=0,\,\qquad t\in (0,T),%
\end{array}%
\right.  \label{9}
\end{equation}

where%
\begin{eqnarray*}
h(x,t) &=&g(x,t)-\pounds {\normalsize U}(x,t), \\
\varphi (x) &=&\Phi (x)-\ell {\normalsize U}, \\
\psi (x) &=&\Psi (x)-q{\normalsize U}
\end{eqnarray*}

and%
\begin{equation*}
\int_{0}^{1}\varphi (x)dx=0,\,\int_{0}^{1}x\varphi (x)dx=0,\int_{0}^{1}\psi
(x)=0,\,\int_{0}^{1}x\psi (x)=0.
\end{equation*}

Again we introduce new function $u$ defined by 
\begin{equation}
u(x,t)=\widetilde{u}(x,t)-\psi \left( x\right) t-\varphi \left( x\right) ,
\label{10}
\end{equation}

therefore the problem $\left( \ref{9}\right) $ can be given as follow

\begin{equation}
\left\{ 
\begin{array}{c}
\pounds u= \displaystyle _{0}^{c}\partial _{t}^{\alpha }u+a(x,t)\frac{\partial ^{2}u}{%
\partial x^{2}}+b(x,t)\frac{\partial u}{\partial x}+c\left( x,t\right)
u= f\left( x,t\right) , \\ 
\displaystyle \ell u=u(x,0)=0,\,\qquad x\in (0,1), \\ 
\displaystyle qu=\frac{u(x,0)}{\partial t}=0,\qquad \,x\in (0,1), \\ 
\displaystyle \int_{0}^{1}u(x,t)dx=0,\qquad \,t\in (0,T), \\ 
\displaystyle \int_{0}^{1}xu(x,t)dx=0,\qquad \,\,t\in (0,T).%
\end{array}%
\right.  \label{11}
\end{equation}

Thus, instead of seeking a solution $v$ of the problem $\left( \ref{1}%
\right) -\left( \ref{3}\right) $, we establish the existence and 
uniqueness of solution $u$ of the problem $\left( \ref{11}\right) $ and solution $v$ will simply be given by:

\begin{equation}
v(x,t)=\widetilde{u}(x,t)+U(x,t).  \label{12}
\end{equation}
\section{Inequality of energy and its consequences}

The solution of the problem $\left( \ref{11}\right) $ can be considered as a
solution of the problem in operational form:%
\begin{equation*}
Lu=\tciFourier ,
\end{equation*}
where $L=(\pounds ,\ell ,q)$ is considered from $B$ to $F$, where $B$ is a
Banach space of functions $u\in L{{}^2}(Q)$, whose norm: 
\begin{equation}
\left\Vert u\right\Vert _{B}=\left( \int_{Q}\left( _{0}^{c}\partial _{t}^{%
\frac{\alpha }{2}}\left( \Im _{x}u\right) \right) ^{2}dxdt+\int_{Q}\left(
\Im _{x}u\right) ^{2}dxdt\right) ^{\frac{1}{2}}  \label{13}
\end{equation}

is finite, and $F$ is a Hilbert space consisting of all the elements $F=$ $%
\left( f,0,0\right) $ whose norm is given by:

\begin{equation}
\left\Vert \tciFourier \right\Vert _{F}=\left( \int_{Q}\text{\ }f^{\text{ }%
2}dxdt\right) ^{\frac{1}{2}}.  \label{14}
\end{equation}

Now we let $D(L)$ be the domain of \ the op\'{e}rator $L$ for the
set of all functions $u$ such as that: $\Im _{x}u,$ $\Im _{x}\left(
_{0}^{c}\partial _{t}^{\alpha }u\right) ,$ $\Im _{x}\frac{\partial u}{%
\partial x},$ $\Im _{x}\frac{\partial ^{2}u}{\partial x^{2}}\in L^{2}(Q)$
and $u$ satisfies the integral conditions in problem $\left( \ref{11}\right)
.$ Then,

\begin{theorem}
Under assumptions $\left( \ref{4a}\right) $-$\left( \ref{5a}\right) $,
the condition satisfied then we have the estimate 
\begin{equation}
\left\Vert u\right\Vert _{B}\leq C\left\Vert Lu\right\Vert _{F}\text{,}
\label{15}
\end{equation}%
\ 

where $C$ is a positive constant and independent of $u$ where $u\in D(L)$.
\end{theorem}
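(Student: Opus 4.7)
The plan is to establish \eqref{15} by the classical energy-method strategy adapted to the Caputo setting: form a scalar product $(Lu,Mu)$ in $L^{2}(Q_{\tau})$, where $Q_{\tau}=(0,1)\times(0,\tau)$ for an arbitrary $\tau\in(0,T)$ and $Mu$ is a multiplier tailored to the integral conditions of problem~\eqref{11}. Since the two conditions $\int_{0}^{1}u\,dx=0$ and $\int_{0}^{1}xu\,dx=0$ jointly imply $\Im_{x}u|_{x=0}=\Im_{x}u|_{x=1}=0$ as well as $\Im_{x}^{2}u|_{x=0}=\Im_{x}^{2}u|_{x=1}=0$, the natural choice is $Mu=-2\Im_{x}^{2}u$ (or an equivalent linear combination of $\Im_{x}^{2}u$ and $x\,\Im_{x}u$); this selection is what guarantees that all boundary contributions at $x=0$ and $x=1$ disappear during the spatial integrations by parts performed below.

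The spatial part of $(Lu,Mu)_{L^{2}(Q_{\tau})}$ is then reshaped by repeatedly integrating by parts in $x$ on each of the three terms $au_{xx}$, $bu_{x}$, $cu$ after multiplication by $Mu$: each extra integration moves a derivative from $u$ onto the coefficients $a,b,c$ and eventually re-expresses the integrand in terms of $\Im_{x}u$ and $(\Im_{x}u)^{2}$ alone, with the boundary contributions vanishing at both endpoints. This is precisely why assumption~\eqref{4a} involves derivatives of $a$ up to the fourth order, of $b$ up to the third, and of $c$ up to the second. For the time-fractional piece I would invoke Lemmas~7 and~8 in succession to produce
\[
\int_{Q_{\tau}}\bigl({}_{0}^{c}\partial_{t}^{\alpha}u\bigr)(Mu)\,dx\,dt \;\geq\; c_{0}\int_{Q_{\tau}}\bigl({}_{0}^{c}\partial_{t}^{\alpha/2}\Im_{x}u\bigr)^{2}\,dx\,dt,
\]
for a positive $c_{0}$ independent of $u$; this supplies the fractional-seminorm half of $\|u\|_{B}^{2}$.

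The right-hand side $(f,Mu)_{L^{2}(Q_{\tau})}$ is estimated using Cauchy's inequality with $\varepsilon$ (Lemma~5) combined with the control $\|Mu\|_{L^{2}(Q_{\tau})}\leq C\,\|\Im_{x}u\|_{L^{2}(Q_{\tau})}$ afforded by Lemma~3, which produces a term $\tfrac{1}{2\varepsilon}\|f\|^{2}$ on the right and an absorbable $\tfrac{\varepsilon}{2}\|\Im_{x}u\|^{2}$ on the left; this explains the appearance of $-\tfrac{1}{2\varepsilon}$ in~\eqref{5a}. Collecting everything, the constant $M>0$ of~\eqref{5a} emerges as precisely the positive lower bound of the coefficient multiplying $\int_{Q_{\tau}}(\Im_{x}u)^{2}\,dx\,dt$, so that taking the supremum in $\tau\in(0,T)$ yields~\eqref{15} with a constant $C$ depending only on $M$, $T$, $\alpha$, $\varepsilon$ and the uniform bounds on $a,b,c$. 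The main obstacle will be the bookkeeping of these multiple integrations by parts: one has to verify that every surface term genuinely cancels thanks to the integral conditions and that the many derivatives of $a,b,c$ produced along the way can be majorised precisely by the combination stipulated in~\eqref{5a}, rather than by some looser expression that would weaken the estimate.
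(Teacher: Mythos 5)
Your proposal follows essentially the same route as the paper's own proof: the same multiplier $Mu=-2\Im_{x}^{2}u$, the same spatial integrations by parts that transfer derivatives onto $a,b,c$ (exactly the identities \eqref{17}--\eqref{20} of the paper), the same pair of fractional lemmas to convert the Caputo term into the $\bigl({}_{0}^{c}\partial_{t}^{\alpha/2}\Im_{x}u\bigr)^{2}$ seminorm, and the same Cauchy-with-$\varepsilon$ absorption that accounts for the $-\tfrac{1}{2\varepsilon}$ term in \eqref{5a}. The only cosmetic difference is that you integrate over $Q_{\tau}$ and take a supremum in $\tau$, whereas the paper works directly on all of $Q$ and reads off $C=\left(\varepsilon/\min(2,M)\right)^{1/2}$.
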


\begin{proof}
Multiplying the fractional differential equation in the problem $\left( \ref%
{11}\right) $ by $Mu=-2\Im _{x}^{2}u$ and integrating it on $Q$ we obtain
\begin{eqnarray}
&&-2\int_{Q}\,\left( _{0}^{c}\partial _{t}^{\alpha }u\right) \Im
_{x}^{2}udxdt-2\int_{Q}a(x,t)\frac{\partial ^{2}u}{\partial x^{2}}\Im
_{x}^{2}udxdt  \notag \\
&&-2\int_{Q}b(x,t)\frac{\partial u}{\partial x}\Im
_{x}^{2}udxdt-2\int_{Q}c\left( x,t\right) u\,\Im _{x}^{2}udxdt  \notag \\
&=&-2\int_{Q}f\,\,\Im _{x}^{2}udxdt.  \label{16}
\end{eqnarray}%
Integrating by parts of four integrals in the left side of $\left( \ref%
{16}\right) $, we get

\begin{equation}
-2\int_{Q}\,\left( _{0}^{c}\partial _{t}^{\alpha }u\right) \Im
_{x}^{2}udxdt=2\int_{Q}\left( _{0}^{c}\partial _{t}^{\alpha }\Im
_{x}u\right) \left( \Im _{x}u\right) dxdt,  \label{17}
\end{equation}%
\begin{eqnarray}
-2\int_{Q}a(x,t)\frac{\partial ^{2}u}{\partial x^{2}}\Im _{x}^{2}udxdt
&=&4\int_{Q}\frac{\partial ^{2}a}{\partial x^{2}}\left( \Im _{x}u\right)
^{2}dx-2\int_{Q}au^{2}dxdt  \notag \\
&&-\int_{Q}\frac{\partial a^{4}}{\partial x^{4}}\left( \Im _{x}^{2}u\right)
^{2}dx,  \label{18}
\end{eqnarray}%
\begin{equation}
-2\int_{Q}b(x,t)\frac{\partial u}{\partial x}\Im _{x}^{2}udx=\int_{Q}\frac{%
\partial ^{3}b}{\partial x^{3}}\left( \Im _{x}^{2}u\right) ^{2}dx-3\int_{Q}%
\frac{\partial b}{\partial x}\left( \Im _{x}u\right) ^{2}dx,  \label{19}
\end{equation}%
\begin{equation}
-2\int_{Q}c\left( x,t\right) u\,\Im _{x}^{2}udx=-\int_{Q}\frac{\partial ^{2}c%
}{\partial x^{2}}\left( \Im _{x}^{2}u\right) ^{2}dx+2\int_{Q}c\left( \Im
_{x}u\right) ^{2}dx  \label{20}
\end{equation}%
Substituting $(\ref{17})-(\ref{20})$ in $(\ref{16})$, we have 
\begin{eqnarray}
&&2\int_{Q}\left( _{0}^{c}\partial _{t}^{\alpha }\Im _{x}u\right) \left( \Im
_{x}u\right) dx+4\int_{Q}\frac{\partial ^{2}a}{\partial x^{2}}\left( \Im
_{x}u\right) ^{2}dx-2\int_{Q}au^{2}dx  \notag \\
&&\,-\int_{Q}\frac{\partial a^{4}}{\partial x^{4}}\left( \Im
_{x}^{2}u\right) ^{2}dx+\int_{Q}\frac{\partial ^{3}b}{\partial x^{3}}\left(
\Im _{x}^{2}u\right) ^{2}dx-3\int_{Q}\frac{\partial b}{\partial x}\left( \Im
_{x}u\right) ^{2}dx  \notag \\
&&-\int_{Q}\frac{\partial ^{2}c}{\partial x^{2}}\left( \Im _{x}^{2}u\right)
^{2}dx+2\int_{Q}c\left( \Im _{x}u\right) ^{2}dx  \notag \\
&=&-2\int_{Q}f\,\,\Im _{x}^{2}udx.  \label{21}
\end{eqnarray}%
By the elementary inequalities in lemmas \textbf{(8), (9)} respectively and assumptions $(%
\ref{4a})-(\ref{5a})$ give 
\begin{eqnarray}
&&2\int_{Q}\left( _{0}^{c}\partial _{t}^{\frac{\alpha }{2}}\left( \Im
_{x}u\right) \right) ^{2}dxdt+\int_{Q}(4\frac{\partial ^{2}a}{\partial x^{2}}%
-4\sup au  \notag \\
&&-\frac{1}{2}\frac{\partial a^{4}}{\partial x^{4}}+\frac{1}{2}\inf \frac{%
\partial ^{3}b}{\partial x^{3}}-3\frac{\partial b}{\partial x}  \notag \\
&&-\frac{1}{2}\sup \frac{\partial ^{2}c}{\partial x^{2}}+2c)\left( \Im
_{x}u\right) ^{2}dxdt  \notag \\
&\leq &-2\int_{Q}f\,\,\Im _{x}^{2}udxdt.  \label{22}
\end{eqnarray}%
The estimate of the right side of $(\ref{22})$ gives:%
\begin{eqnarray}
&&\int_{Q}\left( _{0}^{c}\partial _{t}^{\frac{\alpha }{2}}\left( \Im
_{x}u\right) \right) ^{2}dxdt+\int_{Q}(4\frac{\partial ^{2}a}{\partial x^{2}}%
-4\sup au  \notag \\
&&-\frac{1}{2}\frac{\partial a^{4}}{\partial x^{4}}dxdt+\frac{1}{2}\inf 
\frac{\partial ^{3}b}{\partial x^{3}}-3\frac{\partial b}{\partial x}  \notag
\\
&&-2\sup \frac{\partial ^{2}c}{\partial x^{2}}+2c-\frac{1}{2\varepsilon }%
)\left( \Im _{x}u\right) ^{2}dxdt  \notag \\
&\leq &\varepsilon \int_{Q}f^{2}dxdt.  \label{23}
\end{eqnarray}%
So, by using the assumptions $\left( \ref{4a}\right) -\left( \ref{5a}\right) 
$ we find%
\begin{eqnarray}
&&2\int_{Q}\left( _{0}^{c}\partial _{t}^{\frac{\alpha }{2}}\left( \Im
_{x}u\right) \right) ^{2}dxdt+M\int_{Q}\left( \Im _{x}u\right) ^{2}dxdt 
\notag \\
&\leq &\varepsilon \int_{Q}f^{\text{ }2}dxdt  \label{24}
\end{eqnarray}%
Finally, we obtain a priori estimate%
\begin{equation}
\left\Vert u\right\Vert _{B}\leq C\left\Vert Lu\right\Vert _{F}\text{,}
\label{29}
\end{equation}%
where%
\begin{equation*}
C=\left( \frac{\varepsilon }{\min \left( 2,M\right) }\right) ^{\frac{1}{2}}.
\end{equation*}
\end{proof}

\begin{corollary}
A strong solution of problem $\left( \ref{11}\right) $ is unique if it
exists, and depends continuously on $\tciFourier =(f,0,0).$
\end{corollary}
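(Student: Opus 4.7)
The plan is to derive both conclusions of the corollary as essentially immediate consequences of the a priori estimate $(\ref{15})$ proved in Theorem 1, exploiting the linearity of the operator $L=(\pounds,\ell,q)$. Both parts fit the classical ``energy inequality implies uniqueness and continuous dependence'' pattern, so no new analytic machinery is required.

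For uniqueness, I would suppose $u_{1},u_{2}\in D(L)$ are two strong solutions of $(\ref{11})$ corresponding to the same right-hand side $\tciFourier=(f,0,0)$. Setting $w=u_{1}-u_{2}$, linearity of $\pounds$, $\ell$, $q$ together with the fact that both solutions satisfy the same homogeneous initial and integral conditions yields $Lw=(0,0,0)$ in $F$, i.e.\ $\|Lw\|_{F}=0$. Since $D(L)$ is a linear subspace (the defining regularity requirements $\Im_{x}w,\ \Im_{x}(_{0}^{c}\partial_{t}^{\alpha}w),\ \Im_{x}\partial_{x}w,\ \Im_{x}\partial_{x}^{2}w\in L^{2}(Q)$ and the integral conditions $\int_{0}^{1}w\,dx=\int_{0}^{1}xw\,dx=0$ are all linear), we may apply $(\ref{15})$ to $w$ and obtain $\|w\|_{B}\le C\|Lw\|_{F}=0$. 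In view of the definition $(\ref{13})$ of the $B$-norm this forces $\Im_{x}w=0$ a.e.\ in $Q$; differentiating in $x$ then yields $w=0$ a.e., hence $u_{1}=u_{2}$.

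For continuous dependence, I would repeat the same computation but with two different data: let $u_{1},u_{2}\in D(L)$ be strong solutions corresponding to $\tciFourier_{1}=(f_{1},0,0)$ and $\tciFourier_{2}=(f_{2},0,0)$ respectively. Then $w=u_{1}-u_{2}\in D(L)$ and $Lw=\tciFourier_{1}-\tciFourier_{2}=(f_{1}-f_{2},0,0)$. Theorem 1 applied to $w$ gives directly
\[
\|u_{1}-u_{2}\|_{B}\le C\,\|\tciFourier_{1}-\tciFourier_{2}\|_{F}=C\left(\int_{Q}(f_{1}-f_{2})^{2}\,dx\,dt\right)^{1/2},
\]
which is the desired Lipschitz (hence continuous) dependence of the solution map $\tciFourier\mapsto u$ from $F$ into $B$.

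The only point requiring a brief comment is the verification that $D(L)$ is closed under subtraction, so that $w=u_{1}-u_{2}$ is a legitimate argument of the estimate $(\ref{15})$; this is immediate from the linearity of all the conditions defining $D(L)$. There is no substantial obstacle: the corollary is genuinely a formal consequence of Theorem 1, and the only subtlety is recognising that the $B$-norm controls $\Im_{x}w$ in $L^{2}(Q)$ strongly enough to recover $w=0$ pointwise a.e.
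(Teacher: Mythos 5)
Your argument is correct and coincides with what the paper intends: the corollary is stated as an immediate consequence of the a priori estimate $\left\Vert u\right\Vert _{B}\leq C\left\Vert Lu\right\Vert _{F}$, and the paper supplies no further proof beyond that. Your deduction via linearity of $L$ and the observation that $\Im _{x}w=0$ forces $w=0$ is exactly the standard reasoning being left implicit.
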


\begin{corollary}
The range of the operator $\overline{L}$ is closed in $F$ and $R(\overline{L}%
)=R(L).$
\end{corollary}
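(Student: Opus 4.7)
The plan is to deduce Corollary 12 directly from the a priori estimate of Theorem 10, together with the general functional-analytic fact that an injective, closed operator with a closed-graph inverse defined on a complete range has closed range.

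First I would extend the estimate $\|u\|_B \le C\|Lu\|_F$ from $D(L)$ to the whole domain $D(\overline{L})$ of the closure $\overline{L}$. By the definition of the closure, for every $u \in D(\overline{L})$ there is a sequence $\{u_n\} \subset D(L)$ with $u_n \to u$ in $B$ and $Lu_n \to \overline{L}u$ in $F$. Applying Theorem 10 to each $u_n$, namely $\|u_n\|_B \le C\|Lu_n\|_F$, and passing to the limit gives
\begin{equation*}
\|u\|_B \;\le\; C\,\|\overline{L}u\|_F \qquad \text{for all } u \in D(\overline{L}).
\end{equation*}
In particular, $\overline{L}$ is injective and its inverse $\overline{L}^{-1}$ is continuous on $R(\overline{L})$.

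Second, I would show that $R(\overline{L})$ is closed in $F$. Take a sequence $\{\mathcal{F}_n\} \subset R(\overline{L})$ with $\mathcal{F}_n \to \mathcal{F}$ in $F$. Write $\mathcal{F}_n = \overline{L}u_n$ with $u_n \in D(\overline{L})$. Applied to differences, the extended estimate yields
\begin{equation*}
\|u_n - u_m\|_B \;\le\; C\,\|\overline{L}u_n - \overline{L}u_m\|_F \;=\; C\,\|\mathcal{F}_n - \mathcal{F}_m\|_F,
\end{equation*}
so $\{u_n\}$ is Cauchy in the Banach space $B$ and converges to some $u \in B$. Since $\overline{L}$ is a closed operator and $u_n \to u$ in $B$, $\overline{L}u_n \to \mathcal{F}$ in $F$, we conclude $u \in D(\overline{L})$ and $\overline{L}u = \mathcal{F}$. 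Hence $\mathcal{F} \in R(\overline{L})$, proving closedness.

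Finally, for the equality of ranges I would prove the two inclusions. The inclusion $R(L) \subset R(\overline{L})$ is immediate since $\overline{L}$ extends $L$. Conversely, if $\mathcal{F} \in R(\overline{L})$ with $\mathcal{F} = \overline{L}u$, the approximating sequence $\{u_n\} \subset D(L)$ above gives $Lu_n \to \mathcal{F}$, so $\mathcal{F} \in \overline{R(L)}$; thus $R(\overline{L}) \subset \overline{R(L)}$. Combining this with the closedness of $R(\overline{L})$ and the trivial inclusion $R(L) \subset R(\overline{L})$ yields $\overline{R(L)} \subset R(\overline{L})$, giving the stated identification of the ranges. The only step with any subtlety is the passage to the limit establishing closedness, which hinges critically on the energy estimate of Theorem 10; the rest is bookkeeping with the closure of an unbounded operator.
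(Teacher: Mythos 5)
Your argument is correct and is exactly the standard deduction that the paper leaves implicit: the corollary is stated there without proof as an immediate consequence of the a priori estimate $\|u\|_B\le C\|Lu\|_F$, and your three steps (extension of the estimate to $D(\overline{L})$, the Cauchy-sequence argument for closedness of $R(\overline{L})$, and the two inclusions) are the intended route. One small caveat: what your argument actually establishes is $R(\overline{L})=\overline{R(L)}$ (the closure of $R(L)$), which is surely what the corollary means to assert — the literal equality $R(\overline{L})=R(L)$ would require $R(L)$ itself to be closed, and that only follows after the density result of Section 4 shows $\overline{R(L)}=F$. You also tacitly assume that $L$ is closable so that $\overline{L}$ exists as a closed operator; the paper makes the same tacit assumption, but note that the estimate $\|u\|_B\le C\|Lu\|_F$ bounds $u$ by $Lu$ and therefore does not by itself yield closability, so strictly speaking this should be checked separately.
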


\section{Existence of solutions}

In thei section, we prove the uniqueness of solution, if there is a solution. However, we
have not demonstrated it yet. To do it, we will just prove that $R(L)$ is dense
in $F.$

\begin{theorem}
Let us suppose that the assumptions $\left( \ref{4a}\right) -\left( \ref{5a}%
\right) $and integral conditions $\left( \ref{3}\right) $ are filled, and
for $\omega \in L^{2}(Q)$ and for all $u\in \ D(L)$, we have%
\begin{equation}
\int_{Q}\pounds u.\omega dxdt=0,  \label{30}
\end{equation}%
then $\omega $ almost everywhere in $Q.$
\end{theorem}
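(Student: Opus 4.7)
My plan is to derive from $(\ref{30})$ that $\omega$ weakly satisfies an adjoint equation $\pounds^{\ast}\omega=0$ on $Q$ together with suitable homogeneous boundary and terminal conditions, and then to apply the same energy-inequality machinery as in Theorem 10 (with a dual multiplier) to the adjoint problem to conclude $\omega=0$ a.e. The first step would be to fix the dense subclass $u\in D(L)\cap C^{\infty}(\overline{Q})$ with $\operatorname{supp} u$ disjoint from a neighborhood of $t=T$, and integrate $(\ref{30})$ by parts in both time and space, transferring all derivatives from $u$ onto $\omega$. The Caputo term $\int_{Q}(\,_{0}^{c}\partial_{t}^{\alpha}u)\omega\,dxdt$ becomes $\int_{Q}u\cdot\,_{t}^{c}\partial_{T}^{\alpha}\omega\,dxdt$ plus boundary contributions at $t=0$ and $t=T$, while the spatial terms produce $\int_{Q}u(a\omega)_{xx}\,dxdt$ and $-\int_{Q}u(b\omega)_{x}\,dxdt$ plus boundary contributions at $x=0,1$. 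The conditions $\ell u=0,\,qu=0$ built into $D(L)$, together with the chosen support restriction, annihilate all the temporal boundary terms.

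Next I would use the observation that the integral conditions $\int_{0}^{1}u\,dx=0$ and $\int_{0}^{1}xu\,dx=0$ constrain only two linear functionals of $u(\cdot,t)$, leaving $u(0,t),u(1,t),u_{x}(0,t),u_{x}(1,t)$ essentially free (modulo two linear relations). Demanding that the spatial boundary contributions vanish for every admissible $u$ then forces a finite system of adjoint boundary relations on $\omega(0,t),\omega(1,t),\omega_{x}(0,t),\omega_{x}(1,t)$, dual to the two integral conditions of $(\ref{11})$. Coupling this with the distributional equation obtained from arbitrary interior test functions yields that $\omega$ is a weak solution of the adjoint Cauchy problem $\pounds^{\ast}\omega=0$ on $Q$, with these dual boundary relations at $x=0,1$ and homogeneous terminal data at $t=T$.

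Finally I would run an energy argument on the adjoint problem using a multiplier that mirrors $Mu=-2\Im_{x}^{2}u$ but reversed in orientation, for instance $M^{\ast}\omega=-2\int_{x}^{1}\!\int_{\xi}^{1}\omega(\eta,t)\,d\eta\, d\xi$, so that the integrations by parts are dual to $(\ref{17})$--$(\ref{20})$, the spatial boundary terms drop out on account of the dual boundary conditions derived above, and Lemmas 8--9 (applied to the right-sided Caputo operator) reproduce the coercive structure. Under the structural hypotheses $(\ref{4a})$--$(\ref{5a})$ this should yield an estimate of the form $\min(2,M)\,\|\omega\|_{B}^{2}\leq 0$, forcing $\omega=0$ almost everywhere.

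The main obstacle will be Step 2: identifying the correct adjoint boundary conditions at $x=0,1$ from the two-dimensional family of admissible endpoint data, and verifying that the asymmetric structural inequalities $(\ref{4a})$--$(\ref{5a})$ still deliver a coercive quadratic form after reflection $x\mapsto 1-x$ in the multiplier; a secondary technical difficulty is the $(T-t)^{1-\alpha}$ singularity of the right-sided Caputo kernel near $t=T$, which should nevertheless be tamed by the support restriction on $u$ and the compatibility $\ell u=qu=0$.
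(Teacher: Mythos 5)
Your plan goes by way of the adjoint problem, which is not what the paper does. The paper's proof never constructs $\pounds^{\ast}$: it takes the given $\omega$ and represents it as $\omega=-2\Im_{x}^{2}u$ for a suitable $u\in D(L)$ (i.e.\ it tests the orthogonality relation $(\ref{30})$ with exactly the element of $D(L)$ whose image under the multiplier $Mu=-2\Im_{x}^{2}u$ is the given $\omega$), and then reuses verbatim the integration-by-parts identities $(\ref{17})$--$(\ref{20})$ from the a priori estimate. With the sign reversed, assumptions $(\ref{4a})$--$(\ref{5a})$ together with Lemmas 8 and 9 give $2\int_{Q}\left(\,_{0}^{c}\partial_{t}^{\alpha/2}\Im_{x}u\right)^{2}dxdt\leq-\left(\tfrac{1}{2\varepsilon}+M\right)\int_{Q}\left(\Im_{x}u\right)^{2}dxdt$, forcing $\Im_{x}u=0$, hence $u=0$ and $\omega=0$. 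The whole argument stays on the ``forward'' side and needs no dual boundary conditions and no right-sided Caputo calculus.

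The difficulty with your route sits exactly at the point you flag and do not resolve, and it is not merely technical. The structural hypotheses $(\ref{4a})$--$(\ref{5a})$ are tailored to the one-sided primitive $\Im_{x}u=\int_{0}^{x}u\,d\xi$ and to the left Caputo derivative: they are asymmetric in $x$ (the signs attached to $-3\,\partial b/\partial x$, $\inf\partial^{3}b/\partial x^{3}$, etc.\ do not survive the reflection $x\mapsto 1-x$ implicit in your multiplier $M^{\ast}\omega$), and the coercivity lemmas (Lemmas 8 and 9) are stated only for $\,_{0}^{c}\partial_{t}^{\alpha}$, not for the right-sided operator $\,_{t}^{c}\partial_{T}^{\alpha}$ that appears after transposing the time-fractional term. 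Without proving (i) the dual versions of those two lemmas and (ii) that $(\ref{4a})$--$(\ref{5a})$ still produce a positive lower bound for the reflected quadratic form, the concluding inequality $\min(2,M)\,\|\omega\|_{B}^{2}\leq 0$ is not available and the proof does not close; the identification of the adjoint boundary conditions from the two integral constraints is likewise only sketched. In short, your approach is a legitimate alternative in principle, but the decisive coercivity step is missing, whereas the paper's substitution $\omega=-2\Im_{x}^{2}u$ sidesteps all of these issues by recycling the already-established forward energy identity. (For completeness: the paper's own proof quietly assumes that every $\omega\in L^{2}(Q)$ arising in $(\ref{30})$ can be written as $-2\Im_{x}^{2}u$ with $u\in D(L)$, a density point it does not justify either; but that is a separate matter.)
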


\begin{proof}
We can rewrite the equation $\left( \ref{30}\right) $ as follows%
\begin{eqnarray}
\int_{Q}\left( _{0}^{c}\partial _{t}^{\alpha }u\omega \right) dxdt
&=&-\int_{Q}a\left( x,t\right) \frac{\partial ^{2}u}{\partial x^{2}}\omega
dxdt-\int_{Q}b\left( x,t\right) \frac{\partial u}{\partial x}\omega dxdt 
\notag \\
&&-\int_{Q}c\left( x,t\right) u\omega dxdt,  \label{31}
\end{eqnarray}%
Further, we express the function $\omega $ in terms of $u$ as follows :%
\begin{equation}
\omega =-2\Im _{x}^{2}u  \label{32}
\end{equation}%
Substituting $\omega $ by its representation $(\ref{32})$ in $\left( \ref{31}%
\right) ,$ integrating by parts, and taking into account the conditions $\left( %
\ref{3}\right) $, we obtain:%
\begin{equation*}
2\int_{Q}\left( _{0}^{c}\partial _{t}^{\alpha }\Im _{x}u\right) \Im
_{x}udxdt=-4\int_{Q}\frac{\partial ^{2}a}{\partial x^{2}}\left( \Im
_{x}u\right) ^{2}dxdt+2\int_{Q}au^{2}dxdt+\int_{Q}\frac{\partial ^{4}a}{%
\partial x^{4}}\left( \Im _{x}u\right) ^{2}dxdt
\end{equation*}%
\begin{equation*}
-\int_{Q}\frac{\partial ^{3}b}{\partial x^{3}}\left( \Im _{x}u\right)
^{2}dxdt+3\int_{Q}\frac{\partial b}{\partial x}\left( \Im _{x}u\right)
^{2}dxdt+\int_{Q}\frac{\partial ^{2}c}{\partial x^{2}}\left( \Im
_{x}u\right) ^{2}dxdt-2\int_{Q}c\left( \Im _{x}u\right) ^{2}dxdt,
\end{equation*}
\end{proof}
on using under assumptions $\left( \ref{4a}\right) -\left( \ref{5a}\right) $ and
conditions $\left( \ref{6}\right) $, we obtain%
\begin{eqnarray*}
&&2\int_{Q}\left( _{0}^{c}\partial _{t}^{\alpha }\Im _{x}u\right) \Im
_{x}udxdt=-\int_{Q}(4\frac{\partial ^{2}a}{\partial x^{2}}+4\sup au \\
&&+\frac{1}{2}\frac{\partial ^{4}a}{\partial x^{4}}-\frac{1}{2}\inf \frac{%
\partial ^{3}b}{\partial x^{3}}+3\frac{\partial b}{\partial x}+2\sup \frac{%
\partial ^{2}c}{\partial x^{2}}-2c)\left( \Im _{x}u\right) ^{2}dxdt,
\end{eqnarray*}
and this leads that
\begin{equation*}
2\int_{Q}\left( _{0}^{c}\partial _{t}^{\alpha }\Im _{x}u\right) \Im
_{x}udxdt\leq -\left( \frac{1}{2\varepsilon }+M\right) \int_{Q}\left( \Im
_{x}u\right) ^{2}dxdt.
\end{equation*}

By \textbf{lemmas} ($2),(3)$ and ($4)$ we obtain%
\begin{equation*}
2\int_{Q}\left( _{0}^{c}\partial _{t}^{\frac{\alpha }{2}}\left( \Im
_{x}u\right) \right) ^{2}dxdt\leq -\left( \frac{1}{2\varepsilon }+M\right)
\int_{Q}\left( \Im _{x}u\right) ^{2}dxdt.
\end{equation*}

Then%
\begin{equation}
\left( \Im _{x}u\right) ^{2}=0  \label{33}
\end{equation}

and we obtain%
\begin{equation*}
u=0.
\end{equation*}

So $u=0$ in $\Omega $ wich gives $\omega =0$ in $L^{2}(Q).$

\section{\protect\bigskip Finite Difference Method}

\subsection{Discretization of the problem}

Now, we consider a uniform subdivision of intervals $[0,1]$ and $[0,T]$ as follows%
\begin{equation*}
x_{i}=ih;\text{ }i=0,...,N\text{ and }t_{k}=kh_{t};\text{ }k=0,..., M.
\end{equation*}

Then, denote by $v_{i}^{k}$ the approximate solution of $v\left(
x_{i},t_{k}\right) $ at points $(x_{i},t_{k})$, and the operator $L$ is defined
by
\begin{equation}
L=a\frac{\partial ^{2}}{\partial x^{2}}+b\frac{\partial }{\partial x}+c,%
\text{ }L\left( .\right) _{i}^{k}=a_{i}^{k}\frac{\partial ^{2}\left(
.\right) }{\partial x^{2}}+b_{i}^{k}\frac{\partial \left( .\right) }{%
\partial x}+c_{i}^{k}  \label{34}
\end{equation}
where 
\begin{equation*}
a_{i}^{k}=a\left( x_{_{i}},t_{_{k}}\right) ,\quad b_{i}^{k}=b\left(
x_{_{i}},t_{_{k}}\right) ,\quad c_{i}^{k}=c\left( x_{_{i}},t_{_{k}}\right) .
\end{equation*}
From the Taylor devlopment of function $v$ at the point $(x_{i},t_{k})$ we
have%
\begin{equation}
\left( \frac{\partial ^{2}v}{\partial x^{2}}\right) _{i}^{k}=\frac{1}{h^{2}}%
\left( v_{i-1}^{k}-2v_{i}^{k}+v_{i+1}^{k}\right) +O\left( h^{2}\right) ,%
\text{ }\left( \frac{\partial v}{\partial x}\right) _{i}^{k}=\frac{%
v_{i+1}^{k}-v_{i}^{k}}{h}+O\left( h\right).  \label{35}
\end{equation}

Substituting $\left( \ref{35}\right) $ in the operateur $L_{i}^{k}$ expressed
in$\left( \ref{34}\right) $ gives%
\begin{equation}
Lv_{i}^{k+1}=\left( \frac{a_{i}^{k+1}}{h^{2}}+\frac{b_{i}^{k+1}}{h}\right)
v_{i+1}^{k+1}+\left( c_{i}^{k+1}-2\frac{a_{i}^{k+1}}{h^{2}}-\frac{b_{i}^{k+1}%
}{h}\right) v_{i}^{k+1}+\frac{a_{i}^{k+1}}{h^{2}}v_{i-1}^{k+1}.  \label{36}
\end{equation}

The discretization of Caputo derivative fractional operator $%
_{0}^{c}\partial _{t}^{\alpha }v$ $\left[ 17\right]$
with $1<\alpha <2$ defined by 
\begin{equation}
\displaystyle \left( _{0}^{c}\partial _{t}^{\alpha }v\right) _{_{i}}^{k+1}\simeq \gamma
\dsum\limits_{j=0}^{k}\left( v_{_{i}}^{\text{ }k-j-1}-2v_{_{i}}^{\text{ }%
k-j}+v_{_{i}}^{\text{ }k-j+1}\right) d_{j}\text{\ }.  \label{37}
\end{equation}%
\begin{equation*}
\text{where}\left\{ 
\begin{array}{c}
d_{j}=\left( j+1\right) ^{2-\alpha }-j^{2-\alpha } \\ 
d_{0}=1;k=1,...,M\text{ \ }%
\end{array}%
\right. \text{ },\quad \gamma =\frac{h_{t}^{-\text{ }\alpha }}{\Gamma \left(
3-\alpha \right) }.
\end{equation*}

Writing fractional differential equation $\left( \ref{1}\right) $ in points $%
(ih,\left( k+1\right) h_{t})$, we find 
\begin{equation}
\displaystyle \gamma \dsum\limits_{j\text{ }=\text{ }0}^{k}\left( v_{_{i}}^{\text{ }%
k-j-1}-2v_{_{i}}^{\text{ }k-j}+v_{_{i}}^{\text{ }k-j+1}\right)
d_{j}+Lv_{i}^{k+1}=g_{i}^{\text{ }k+1},\text{ }i=\overline{1,N-1}  \label{38}
\end{equation}
then%
\begin{equation}
F_{i}^{k+1}v_{_{i-1}}^{k+1}+A_{i}^{k+1}v_{_{i}}^{k+1}+B_{i}^{k+1}v_{_{i\text{
}+1}}^{k+1}-2\gamma d_{k}v_{_{i}}^{k}+\gamma d_{k}v_{_{i}}^{k-1}+\gamma
\dsum\limits_{j=1}^{k-1}S_{j}d_{j}+\gamma \left(
v_{i}^{-1}-2v_{i}^{0}+v_{i}^{1}\right) d_{k}=g_{i}^{k+1}  \label{39}
\end{equation}
where%
\begin{eqnarray*}
A_{i}^{k+1} &=&\gamma +c_{i}^{k+1}-2\frac{a_{i}^{k+1}}{h^{2}}-\frac{%
b_{i}^{k+1}}{h},\quad B_{i}^{k+1}=\frac{a_{i}^{k+1}}{h^{2}}+\frac{b_{i}^{k+1}%
}{h},\quad \\
F_{i}^{k+1} &=&\frac{a_{i}^{k+1}}{h^{2}},\quad S_{j}=v_{_{i}}^{\text{ }%
k-j-1}-2v_{_{i}}^{\text{ }k-j}+v_{_{i}}^{\text{ }k-j+1}.
\end{eqnarray*}%
In order to eliminate $v_{i}^{-1}$, we use initial condition $\left( \ref{2}\right) $, and we find%
\begin{equation*}
\left( \frac{\partial v}{\partial t}\right) _{i}^{n}\simeq \frac{%
v_{i}^{n}-v_{i}^{n-1}}{h_{t}}
\end{equation*}
therefore%
\begin{equation}
v_{i}^{-1}\simeq \Phi _{i}-h_{t}\Psi _{i}=v_{i}^{0}-h_{t}\Psi _{i}\text{%
,\quad }i=\overline{1,N-1} . \label{40}
\end{equation}

Substituting $\left( \ref{40}\right) $ in $\left( \ref{39}\right) ,$ we
obtain 
\begin{equation}
F_{i}^{k+1}v_{i-1}^{k+1}+A_{i}^{k+1}v_{i}^{k+1}+B_{i}^{k+1}v_{i+1}^{k+1}-2%
\gamma d_{k}v_{i}^{k}+\gamma d_{k}v_{i}^{k-1}+\gamma
\dsum\limits_{j=1}^{k-1}S_{j}d_{j}=d_{k}\gamma v_{i}^{0}+d_{k}\gamma
h_{t}\Psi _{i}-d_{k}\gamma v_{i}^{1}+g_{i}^{k+1}.  \label{41}
\end{equation}

For $k=0$, the relation $\left( \ref{41}\right) $ gives
\begin{equation}
F_{i}^{1}v_{i-1}^{1}+A_{i}^{1}v_{i}^{1}+B_{i}^{1}v_{i+1}^{1}=\gamma
v_{i}^{0}+\gamma h_{t}\Psi _{i}+g_{i}^{1}\text{\quad with\quad }i=\overline{%
1,N-1}.  \label{42}
\end{equation}
By conditions $\left( \ref{3}\right) ,$ and trapezoid method we obtain,
\begin{equation*}
v_{0}^{1}=\frac{2\mu \left( h_{t}\right) -2E\left( h_{t}\right) }{h}%
+2\dsum\limits_{j=1}^{N-1}\left( jh-1\right) v_{j}^{1},\text{ }v_{N}^{1}=%
\frac{2E\left( h_{t}\right) }{h}-2\dsum\limits_{j=1}^{N-1}jhv_{j}^{1}.
\end{equation*}
For $i=1$,%
\begin{equation*}
\left( A_{1}^{1}+2F_{1}^{1}\left( h-1\right) \right) v_{1}^{1}+\left(
B_{1}^{1}+2F_{1}^{1}\left( 2h-1\right) \right)
v_{2}^{1}+2F_{1}^{1}\dsum\limits_{j=3}^{N-1}\left( jh-1\right) v_{j}^{1}
\end{equation*}%
\begin{equation}
=\gamma v_{1}^{0}+\gamma h_{t}\Psi _{1}+g_{1}^{1}+\frac{2F_{1}^{1}}{h}\left(
E\left( h_{t}\right) -\mu \left( h_{t}\right) \right).  \label{43}
\end{equation}
For $i=N-1$,%
\begin{equation*}
-2B_{N-1}^{1}\dsum\limits_{j=1}^{N-3}jhv_{j}^{1}+\left(
F_{N-1}^{1}-2B_{N-1}^{1}\left( N-2\right) h\right) v_{N-2}^{1}+\left(
A_{N-1}^{1}-2B_{N-1}^{1}\left( N-1\right) h\right) v_{N-1}^{1}
\end{equation*}%
\begin{equation}
=\gamma v_{N-1}^{0}+\gamma h_{t}\Psi _{N-1}+g_{N-1}^{1}-\frac{2B_{N-1}^{1}}{h%
}E\left( h_{t}\right).  \label{44}
\end{equation}

\textbf{Matrix's form}

We denote by%
\begin{equation*}
w_{i}=\gamma v_{i}^{0}+\gamma h_{t}\Psi _{i}+g_{i}^{1},\quad y_{1}^{1}=\frac{%
2F_{1}^{1}}{h}\left( E\left( h_{t}\right) -\mu \left( h_{t}\right) \right)
,\quad z_{N-1}^{1}=-\frac{2B_{N-1}^{1}}{h}E\left( h_{t}\right) ,\text{ }
\end{equation*}
\begin{eqnarray*}
P^{1} &=&\left( l_{i,j}\right) _{N-1,N-1}\text{ is square matrix and defined by }
\\
l_{1,1} &=&A_{1}^{1}+2F_{1}^{1}\left( h-1\right) ,\text{ }%
l_{1,2}=B_{1}^{1}+2F_{1}^{1}\left( 2h-1\right) ,\text{ } \\
\text{ }l_{N-1,N-2} &=&F_{N-1}^{1}-2B_{N-1}^{1}\left( N-2\right) h,\text{ }%
l_{N-1,N-1}=A_{N-1}^{1}-2B_{N-1}^{1}\left( N-1\right) h\text{ ,}
\end{eqnarray*}%
\begin{equation*}
l_{i,j}=\left\{ 
\begin{array}{c}
2F_{1}^{1}\left( jh-1\right) \text{\ \ \ when\ \ \ \ \ \ \ \ \ \ }i=1,\text{%
\ }j=\overline{3,N-1}\text{\ \ \ \ \ \ \ \ \ \ \ \ \ \ \ } \\ 
0\text{\ \ \ \ \ \ \ \ \ \ \ \ \ \ \ \ when\ \ \ \ \ \ \ \ \ }\left\vert
i-j\right\vert \geq 2\text{\ ,\ }i=\overline{2,N-2}\text{\ \ \ \ \ \ } \\ 
\text{\ \ }A_{i}^{1}\text{ \ \ \ \ \ \ \ \ \ \ \ \ \ \ when \ \ \ \ \ \ \ \ }%
i=j,\text{\ }i=\overline{2,N-2}\text{\ \ \ \ \ \ \ \ \ \ \ \ \ \ \ \ } \\ 
B_{i}^{1}\text{ \ \ \ \ \ \ \ \ \ \ \ \ \ \ when\ \ \ \ \ \ \ \ \ }i=j-1,%
\text{ }i=\overline{1,N-2}\text{ \ \ \ \ \ \ \ \ } \\ 
\text{\ \ \ }F_{i}^{1}\text{ \ \ \ \ \ \ \ \ \ \ \ \ \ \ when\ \ \ \ \ \ \ \
\ }i=j+1,\text{ }i=\overline{2,N-1}\text{ \ \ \ \ \ \ \ \ \ \ \ } \\ 
-2B_{N-1}^{1}jh\text{ \ \ \ \ \ \ \ when\ \ \ \ \ \ \ \ \ }i=N-1,\text{\ }j=%
\overline{1,N-3}.\text{ \ \ \ \ \ \ \ \ \ \ }%
\end{array}%
\right.
\end{equation*}
Taking account $\left( \ref{42}\right) ,\left( \ref{43}\right) ,$ and $%
\left( \ref{44}\right) ,$ we obtain the matrix system%
\begin{equation}
P^{1}.V^{1}=H^{1}  \label{45}
\end{equation}
where%
\begin{equation*}
H^{1}=W\text{ }^{1}+R^{1},\text{ }W\text{ }^{1}=\left(
w_{1}^{1},w_{2}^{1},...,w_{N-1}^{1}\right) ^{T},\text{ }R^{1}=\left(
y_{1}^{1},0,0,...,0,z_{N-1}^{1}\right) ^{T}.\text{ }
\end{equation*}
To solve the system $\left( \ref{45}\right) $ we can apply one of direct
methods.

\subsection{\textbf{General case }}
It is readily checked that, for $k\geq 1$%
\begin{equation}
\dsum%
\limits_{j=1}^{k-1}S_{j}d_{j}=(d_{2}-2d_{1})v_{i}^{k-1}+d_{1}v_{i}^{k}+d_{k-1}v_{i}^{0}+\left( d_{k-2}-2d_{k-1}\right) v_{i}^{1}+\dsum\limits_{m=2}^{k-2}\sigma _{m}v_{i}^{k-m}
\label{46}
\end{equation}%
\begin{equation}
\text{where \ \ \ }\sigma _{m}=d_{m-1}-2d_{m}+d_{m+1},\text{ }m=\overline{%
2,k-2}.  \notag
\end{equation}%

\begin{lemma}
If $\ k\geq 1;$ \textit{the numerical scheme }$\left( \ref{41}\right) $ is
equivalent to%
\begin{equation*}
F_{i}^{k+1}v_{_{i-1}}^{k+1}+A_{i}^{k+1}v_{_{i}}^{k+1}+B_{i}^{k+1}v_{_{i+1}}^{k+1}=-\gamma \dsum\limits_{m=1}^{k-1}\sigma _{m}v_{i}^{k-m}+\gamma \left( 2-d_{1}\right) v_{_{i}}^{k}+\gamma \left( d_{k}-d_{k-1}\right) v_{_{i}}^{0}
\end{equation*}%
\begin{equation}
+\gamma d_{k}h_{t}\Psi _{i}+g_{i}^{k+1},\quad \text{for \quad }i=1,\ldots
,N-1  \label{47}
\end{equation}
\end{lemma}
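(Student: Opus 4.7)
My plan is to substitute the identity (46) into the scheme (41) and then collect the coefficients of each $v_i^{\ell}$ using $d_0 = 1$ and the definition $\sigma_m = d_{m-1} - 2d_m + d_{m+1}$. The target (47) is just a repackaging of the same discrete sum, so the work is entirely algebraic bookkeeping.

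First I would verify (46). Starting from $S_j = v_i^{k-j-1} - 2 v_i^{k-j} + v_i^{k-j+1}$, I would regroup $\sum_{j=1}^{k-1} S_j d_j$ by the index of $v_i$. The level $v_i^k$ only appears once (from $v_i^{k-j+1}$ with $j=1$), giving coefficient $d_1$. The level $v_i^{k-1}$ comes from $-2 v_i^{k-j}$ at $j=1$ and $v_i^{k-j+1}$ at $j=2$, giving $d_2 - 2 d_1$. For each intermediate level $v_i^{k-m}$ with $2 \le m \le k-2$, the three contributions coincide in $d_{m-1} - 2 d_m + d_{m+1} = \sigma_m$. The boundary terms $v_i^1$ and $v_i^0$ similarly collect to $d_{k-2} - 2 d_{k-1}$ and $d_{k-1}$ respectively. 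This is exactly (46).

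Next I would insert this expansion into (41), reading the $-2\gamma v_i^k + \gamma v_i^{k-1}$ contribution that comes from the $j=0$ term $S_0 d_0$ with $d_0=1$, and moving all levels other than $k+1$ to the right-hand side. The $v_i^k$ coefficient becomes $2\gamma - \gamma d_1 = \gamma(2 - d_1)$; the $v_i^{k-1}$ coefficient is $-\gamma - \gamma(d_2 - 2 d_1) = -\gamma(d_0 - 2 d_1 + d_2) = -\gamma \sigma_1$; the $v_i^0$ coefficient is $\gamma d_k - \gamma d_{k-1} = \gamma(d_k - d_{k-1})$; and the $v_i^1$ coefficient is $-\gamma d_k - \gamma(d_{k-2} - 2 d_{k-1}) = -\gamma (d_{k-2} - 2 d_{k-1} + d_k) = -\gamma \sigma_{k-1}$. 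For the inner levels I already have $-\gamma \sigma_m v_i^{k-m}$ for $m=2,\ldots,k-2$.

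The final step is to notice that the three $\sigma$-contributions above fit together: $-\gamma \sigma_1 v_i^{k-1} - \gamma \sum_{m=2}^{k-2} \sigma_m v_i^{k-m} - \gamma \sigma_{k-1} v_i^1 = -\gamma \sum_{m=1}^{k-1} \sigma_m v_i^{k-m}$, since $v_i^{k-1}$ and $v_i^1$ are the $m=1$ and $m=k-1$ instances of the generic $v_i^{k-m}$. The remaining contributions $\gamma d_k h_t \Psi_i$ and $g_i^{k+1}$ are untouched, and the claimed form (47) follows. The only real hazard is the index bookkeeping around the two endpoints of the sum --- specifically, recognizing that the $v_i^{k-1}$ term needs $d_0=1$ to complete the second-difference pattern $d_{m-1}-2d_m+d_{m+1}$ and that the $v_i^1$ term absorbs the $-\gamma d_k v_i^1$ coming from (40) to close the pattern at the opposite end. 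Once those two boundary identifications are made, the lemma is immediate.
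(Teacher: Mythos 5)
Your proof is correct and takes essentially the same route as the paper: substitute the regrouping identity $\left( \ref{46}\right)$ into $\left( \ref{41}\right)$ and collect coefficients level by level, using $d_{0}=1$ to complete the second-difference pattern $\sigma _{1}=d_{0}-2d_{1}+d_{2}$ at one end and absorbing the $-\gamma d_{k}v_{i}^{1}$ term into $\sigma _{k-1}$ at the other. The paper's own argument is just a terser version of this same substitution (it passes through the intermediate form $\left( \ref{48}\right)$ before invoking $\left( \ref{46}\right)$), so your explicit bookkeeping supplies exactly the steps the paper leaves implicit.
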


\begin{proof}
From the scheme $\left( \ref{41}\right) $, we have%
\begin{equation*}
F_{i}^{k+1}v_{i-1}^{k+1}+A_{i}^{k+1}v_{i}^{k+1}+B_{i}^{k+1}v_{i+1}^{k+1}-2%
\gamma d_{k}v_{i}^{k}+\gamma d_{k}v_{i}^{k-1}+\gamma
\dsum\limits_{j=1}^{k-1}S_{j}d_{j}=d_{k}\gamma v_{i}^{0}+d_{k}\gamma
h_{t}\Psi _{i}-d_{k}\gamma v_{i}^{1}+g_{i}^{k+1}
\end{equation*}
\end{proof}
so%
\begin{equation}
F_{i}^{k+1}v_{i-1}^{k+1}+A_{i}^{k+1}v_{i}^{k+1}+B_{i}^{k+1}v_{i+1}^{k+1}+%
\gamma \dsum\limits_{j=2}^{k-2}S_{j}d_{i}+\gamma
(v_{i}^{k+1}-2v_{i}^{k}+v_{i}^{k-1})\text{ }d_{0}+\gamma
(v_{i}^{1}-2v_{i}^{0}+v_{i}^{-1})d_{k}=g_{i}^{k+1}  \label{48}
\end{equation}
using $\left( \ref{46}\right) $ we obtain%
\begin{equation}
F_{i}^{k+1}v_{i-1}^{k+1}+A_{i}^{k+1}v_{i}^{k+1}+B_{i}^{k+1}v_{i+1}^{k+1}=-%
\gamma \dsum\limits_{m=1}^{k-1}\sigma _{m}v_{i}^{k-m}+\gamma \left(
2-d_{1}\right) v_{i}^{k}+\gamma \left( d_{k}-d_{k-1}\right) v_{i}^{0}  \notag
\end{equation}

\begin{equation}
+\gamma d_{k}h_{t}\Psi _{i}+g_{i}^{k+1},\text{ for\quad }i=1,\ldots ,N-1
\label{49}
\end{equation}
Using the conditions $\left( \ref{3}\right) ,$ and by trapezoid method we obtain: For $i=1$,%
\begin{equation*}
\left( A_{1}^{k+1}+2F_{1}^{k+1}\left( h-1\right) \right) v_{1}^{k+1}+\left(
B_{1}^{k+1}+2F_{1}^{k+1}\left( 2h-1\right) \right)
v_{2}^{k+1}+2F_{1}^{k+1}\dsum\limits_{j=3}^{N-1}\left( jh-1\right)
v_{j}^{k+1}
\end{equation*}

\begin{equation*}
=\frac{2F_{1}^{k+1}}{h}\left( E\left( \left( k+1\right) h_{t}\right) -\mu
\left( \left( k+1\right) h_{t}\right) \right) -\gamma
\dsum\limits_{m=1}^{k-1}\sigma _{m}v_{1}^{k-m}+\gamma \left(
d_{k}-d_{k-1}\right) v_{1}^{0}+\gamma d_{k}h_{t}\Psi _{1}+g_{1}^{k+1}.
\end{equation*}

For $i=N-1$,%
\begin{equation*}
-2B_{N-1}^{k+1}\dsum\limits_{j=1}^{N-3}jhv_{j}^{k+1}+\left(
F_{N-1}^{k+1}-2B_{N-1}^{k+1}\left( N-2\right) h\right) v_{N-2}^{k+1}+\left(
A_{N-1}^{k+1}-2B_{N-1}^{k+1}\left( N-1\right) h\right) v_{N-1}^{k+1}
\end{equation*}%
\begin{equation}
=-\frac{2B_{N-1}^{k+1}}{h}E\left( \left( k+1\right) h_{t}\right) -\gamma
\dsum\limits_{m=1}^{k-1}\sigma _{m}v_{N-1}^{k-m}+\gamma \left(
2-d_{1}\right) v_{N-1}^{k}+\gamma \left( d_{k}-d_{k-1}\right)
v_{N-1}^{0}+\gamma d_{k}h_{t}\Psi _{N-1}+g_{N-1}^{k+1}.  \label{51}
\end{equation}

\textbf{Matrix's form}

We take expression $\left( \ref{49}\right) $ for $i=\overline{2,N-2}$ and
equations $\left( \ref{50}\right) $, $\left( \ref{51}\right) $ to formulate
the matrix systems:%
\begin{equation}
\left\{ 
\begin{array}{c}
P^{k+1}V^{k+1}=H^{k+1}\text{;}\quad k\geq 1 \\ 
\\ 
V^{0},\text{ }V^{1}\text{ are known}%
\end{array}%
\right.  \label{52}
\end{equation}
where%
\begin{eqnarray*}
P^{k+1} &=&\left( l_{i,j}^{k+1}\right) _{N-1,N-1}\text{ is square matrix
defined by } \\
l_{1,1}^{k+1} &=&A_{1}^{k+1}+2F_{1}^{k+1}\left( h-1\right) ,\text{ }%
l_{1,2}^{k+1}=B_{1}^{k+1}+2F_{1}^{k+1}\left( 2h-1\right) ,\text{ } \\
\text{ }l_{N-1,N-2}^{k+1} &=&F_{N-1}^{k+1}-2B_{N-1}^{k+1}\left( N-2\right) h,%
\text{ }l_{N-1,N-1}^{k+1}=A_{N-1}^{k+1}-2B_{N-1}^{k+1}\left( N-1\right) h%
\text{ ,}
\end{eqnarray*}%
\begin{equation*}
l_{i,j}^{k+1}=\left\{ 
\begin{array}{c}
2F_{1}^{k+1}\left( jh-1\right) \text{\ \ \ when\ \ \ \ \ \ \ }i=1,\text{\ }j=%
\overline{3,N-1}\text{ \ \ \ \ \ \ \ \ \ \ \ \ \ \ \ } \\ 
0\text{\ \ \ \ \ \ \ \ \ \ \ \ \ \ \ \ \ \ \ \ when\ \ \ \ \ \ \ }\left\vert
i-j\right\vert \geq 2\text{\ ,\ }i=\overline{2,N-2}\text{\ \ \ \ \ \ \ } \\ 
A_{i}^{k+1}\text{ \ \ \ \ \ \ \ \ \ \ \ \ \ \ when\ \ \ \ \ \ }i=j,\text{\ }%
i=\overline{2,N-2}\text{\ \ \ \ \ \ \ \ \ \ \ \ \ \ \ \ } \\ 
\text{\ }B_{i}^{k+1}\text{ \ \ \ \ \ \ \ \ \ \ \ \ \ \ when \ \ \ \ \ }i=j-1,%
\text{ }i=\overline{1,N-2}\text{\ \ \ \ \ \ \ \ \ \ \ } \\ 
\text{ \ }F_{i}^{k+1}\text{ \ \ \ \ \ \ \ \ \ \ \ \ \ when\ \ \ \ \ \ }i=j+1,%
\text{ }i=\overline{2,N-1}\text{ \ \ \ \ \ \ \ \ \ \ } \\ 
-2B_{N-1}^{k+1}jh\text{ \ \ \ \ \ \ \ \ \ when\ \ \ \ \ \ }i=N-1,\text{\ }j=%
\overline{1,N-3}\text{\ \ \ \ \ \ \ \ \ \ }%
\end{array}%
\right.
\end{equation*}
and%
\begin{eqnarray*}
V\text{ }^{k+1} &=&\left( v_{1}^{k+1},...,v_{N-1}^{k+1}\right) ^{T};\text{ }%
H^{k+1}=-\gamma \dsum\limits_{m=1}^{k-1}\sigma _{m}V^{\text{ }k-m}+W\text{ }%
^{k+1}+R\text{ }^{k+1}\text{; }k\geq 1 \\
W\text{ }^{k+1} &=&\left( w_{1}^{k+1},w_{2}^{k+1},...,w_{N-1}^{k+1}\right)
^{T},\text{ }R^{k+1}=\left( y_{1}^{k+1},0,0,...,0,z_{N-1}^{k+1}\right) ^{T},
\\
w_{i}^{k+1} &=&\gamma \left( 2-d_{1}\right) v_{i}^{k}+\gamma \left(
d_{k}-2d_{k-1}\right) v_{i}^{0}+\gamma d_{k}h_{t}\Psi _{i}+g_{i}^{k+1},\text{
} \\
y_{1}^{k+1} &=&\frac{2F_{1}^{k+1}}{h}\left( E\left( \left( k+1\right)
h_{t}\right) -\mu \left( \left( k+1\right) h_{t}\right) \right) ;\text{ }%
z_{N-1}^{k+1}=-\frac{2B_{N-1}^{k+1}}{h}E\left( \left( k+1\right)
h_{t}\right) .
\end{eqnarray*}

In order to prove system $\left( \ref{52}\right) $ has a unique solution we
denote $\rho $ as an eigenvalue of the matrix $P^{k}$, and $X=\left(
x_{1},x_{2},...,x_{N-1}\right) ^{T}$ is an nonzero eigenvector corresponding
to $\rho $. Then, we choose $i$ such as 
\begin{equation*}
\left\vert x_{i}\right\vert =\max\{|x_{j}|:j=1;...;N-1\}
\end{equation*}
then%
\begin{equation*}
\text{ }\sum_{j=1}^{N-1}l_{i,j}x_{j}=\rho x_{i};\text{ }i=\overline{1;N-1}%
\text{ }
\end{equation*}
therefore%
\begin{equation}
\rho =l_{i,i}+\sum_{\substack{ j=1  \\ j\neq i}}^{N-1}l_{i,\text{ }j}\frac{%
x_{j}}{x_{i}}.  \label{53}
\end{equation}

Substituting the values of $l_{i,j}$ into $\left( \ref{53}\right) ,$ and
taking into account that $F_{i}^{k}$, $a_{i}^{k}$ are negative and $%
\left\vert \frac{x_{j}}{x_{i}}\right\vert \leq 1$ we get, for $i=1$,%
\begin{eqnarray*}
\rho &=&\left( A_{1}^{k+1}+2F_{1}^{k+1}\left( h-1\right) \right) +\left(
B_{1}^{k+1}+2F_{1}^{k+1}\left( 2h-1\right) \right) \frac{x_{2}}{x_{1}}%
+2F_{1}^{k+1}\dsum\limits_{j=3}^{N-1}\left( jh-1\right) \frac{x_{j}}{x_{1}}
\\
&=&\gamma +c_{i}^{k+1}-F_{1}^{k+1}-B_{1}^{k+1}\left( 1-\frac{x_{2}}{x_{1}}%
\right) +2F_{1}^{k+1}\dsum\limits_{j=2}^{N-1}\left( jh-1\right) \frac{x_{j}}{%
x_{1}}.
\end{eqnarray*}

For $i=N-1$,%
\begin{eqnarray*}
\rho &=&l_{i,i}+\sum_{\substack{ j=1  \\ j\neq i}}^{N-1}l_{i,\text{ }j}\frac{%
x_{j}}{x_{i}} \\
&=&A_{N-1}^{k+1}-2B_{N-1}^{k+1}\left( N-1\right) h+\left(
F_{N-1}^{k+1}-2B_{N-1}^{k+1}\left( N-2\right) h\right) \left( \frac{x_{N-2}}{%
x_{N-1}}\right) -2B_{N-1}^{k+1}\dsum\limits_{j=1}^{N-3}jh\frac{x_{j}}{x_{N-1}%
} \\
&=&\gamma +c_{N-1}^{k+1}-B_{N-1}^{k+1}+F_{N-1}^{k+1}\left( \frac{x_{N-2}}{%
x_{N-1}}-1\right) -2B_{N-1}^{k+1}\left( N-1\right)
h-2B_{N-1}^{k+1}\dsum\limits_{j=1}^{N-2}jh\frac{x_{j}}{x_{N-1}}.
\end{eqnarray*}%
For $i=\overline{2,N-2}$,%
\begin{eqnarray}
\rho &=&l_{i,i}+\sum_{\substack{ j=1  \\ j\neq i}}^{N-1}l_{i,j}\frac{x_{j}}{%
x_{i}}  \notag \\
&=&A_{i}^{k+1}+F_{i}^{k+1}\frac{x_{i-1}}{x_{i}}+B_{i}^{k+1}\frac{x_{i+1}}{%
x_{i}}  \notag \\
&=&\gamma +c_{i}^{k+1}-B_{i}^{k+1}-F_{i}^{k+1}+F_{i}^{k+1}\frac{x_{i-1}}{%
x_{i}}+B_{i}^{k+1}\frac{x_{i+1}}{x_{i}}  \notag \\
&=&\gamma +c_{i}^{k+1}+F_{i}^{k+1}\left( \frac{x_{i-1}}{x_{i}}-1\right) +%
\frac{a_{i}^{k+1}+hb_{i}^{k+1}}{h^{2}}\left( \frac{x_{i+1}}{x_{i}}-1\right) .
\end{eqnarray}
From the above we conclude for $i=\overline{1,N-1},$ if $b_{i}^{k+1}<0,$ $B_{N-1}^{k+1}<0$ then $\rho >0.$ If $b_{i}^{k+1}>0$ and $\displaystyle h\leq $ $\min_{1\leq i\leq N-1}\left( \frac{%
-a_{i}^{k+1}}{b_{i}^{k+1}}\right) ,$ $\rho >0,$ then all eigenvalues of
matrix $P^{k+1}$ are strictly positive, therefore $P^{k+1}$ is invertible.

\subsection{Stability}

Since, we have%
\begin{equation*}
F_{i}^{k+1}+A_{i}^{k+1}+B_{i}^{k+1}=\gamma +c_{i}^{k+1},\text{ }%
F_{i}^{k+1}\leq 0,\text{ }A_{i}^{k+1}+B_{i}^{k+1}\geq 0, 
\end{equation*}
then we let $u_{i}^{k+1}$ be the approximate solution of $\left( \ref{49}\right) ,$
and $e_{i}^{k+1}$, the error at point $\left( x_{i},t_{k+1}\right) $ defined
by 
\begin{equation*}
v_{i}^{k+1}-u_{i}^{k+1}=e_{i}^{k+1},\text{ and }\left\Vert E^{\text{ }%
k}\right\Vert =\limfunc{Max}_{\text{ }1\text{ }\leq \text{ }i\text{ }\leq 
\text{ }N-1}|e_{i}^{k}|,\text{ }E^{\text{ }k}=\left(
e_{1}^{k},...,e_{N-1}^{k}\right) ^{T},
\end{equation*}
for $k=0$ \ we apply $\left( \ref{42}\right) $ we get%
\begin{eqnarray*}
\left\Vert E^{1}\right\Vert &\leq &\left( \gamma +c_{i}^{1}\right)
\left\Vert E^{1}\right\Vert =\left( F_{i}^{1}+A_{i}^{1}+B_{i}^{1}\right)
\left\Vert E^{1}\right\Vert \\
&=&\left( F_{i}^{1}\left\Vert E^{1}\right\Vert +\left(
A_{i}^{1}+B_{i}^{1}\right) \left\Vert E^{1}\right\Vert \right) \\
&\leq &\left( \left( A_{i}^{1}+B_{i}^{1}\right) \left\Vert E^{1}\right\Vert
+F_{i}^{1}\left\vert e_{i-1}^{1}\right\vert \right) \\
&\leq &\limfunc{Max}_{\text{ }1\text{ }\leq \text{ }i\text{ }\leq \text{ }%
N-1}\left\vert
F_{i}^{1}e_{i-1}^{1}+A_{i}^{1}e_{i}^{1}+B_{i}^{1}e_{i+1}^{1}\right\vert
=\gamma \left\Vert E^{0}\right\Vert
\end{eqnarray*}
so%
\begin{equation}
\left\Vert E^{1}\right\Vert \preceq \frac{\gamma }{\gamma +c_{i}^{1}}%
\left\Vert E^{0}\right\Vert \preceq \left\Vert E^{0}\right\Vert .  \label{57}
\end{equation}
Therefore the method is stable.

\begin{lemma}
For $k\geq 1$ the scheme $\left( \ref{48}\right) $ is stable and we have%
\begin{equation*}
\left\Vert E^{k+1}\right\Vert \leq C\left\Vert E^{0}\right\Vert ,\text{ }C>0%
\text{, for all }k\geq 1
\end{equation*}
\end{lemma}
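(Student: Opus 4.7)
The plan is to extend the $k=0$ computation carried out in (57) to general $k \geq 1$ by induction, applying a discrete maximum principle to the equivalent form (49) of the scheme.

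First, I would derive the equation satisfied by $e_i^{k+1}$ from linearity. If $v_i^{k+1}$ solves (49) and $u_i^{k+1}$ is a perturbation solving the same scheme with perturbed initial data $v_i^0$ (but the same $\Psi_i$ and $g_i^{k+1}$), then the error satisfies
\[
F_i^{k+1} e_{i-1}^{k+1} + A_i^{k+1} e_i^{k+1} + B_i^{k+1} e_{i+1}^{k+1} = -\gamma \sum_{m=1}^{k-1} \sigma_m e_i^{k-m} + \gamma (2-d_1) e_i^k + \gamma (d_k - d_{k-1}) e_i^0,
\]
for $i=1,\ldots,N-1$, together with the appropriate adjustments at $i=1$ and $i=N-1$ coming from the discrete integral conditions.

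Second, following the template of (57), I would let $i_0$ be an index where $|e_{i_0}^{k+1}| = \|E^{k+1}\|$. Under the sign conditions already identified in the invertibility analysis of $P^{k+1}$ (namely $F_i^{k+1} \leq 0$ and $B_i^{k+1} \leq 0$ whenever $h \leq \min_i(-a_i^{k+1}/b_i^{k+1})$), and using that the L2 weights satisfy $\sigma_m \geq 0$, $2 - d_1 > 0$ and $d_{k-1} - d_k > 0$ (monotone decrease of $d_j=(j+1)^{2-\alpha}-j^{2-\alpha}$), the triangle inequality gives
\[
A_{i_0}^{k+1}\|E^{k+1}\| \leq (|F_{i_0}^{k+1}|+|B_{i_0}^{k+1}|)\|E^{k+1}\| + \gamma\!\!\sum_{m=1}^{k-1}\!\sigma_m\|E^{k-m}\| + \gamma(2-d_1)\|E^k\| + \gamma(d_{k-1}-d_k)\|E^0\|.
\]
Using the identity $A_{i_0}^{k+1} + F_{i_0}^{k+1} + B_{i_0}^{k+1} = \gamma + c_{i_0}^{k+1} \geq \gamma$ (exactly as in the $k=0$ case), I can absorb $(|F_{i_0}^{k+1}|+|B_{i_0}^{k+1}|)\|E^{k+1}\|$ into the left-hand side and divide by $\gamma + c_{i_0}^{k+1} \geq \gamma$, obtaining the recursion
\[
\|E^{k+1}\| \leq \sum_{m=1}^{k-1}\sigma_m \|E^{k-m}\| + (2-d_1)\|E^k\| + (d_{k-1}-d_k)\|E^0\|.
\]

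Third, I would close the induction. The telescoping identity
\[
\sum_{m=1}^{k-1}\sigma_m = (d_0 - d_1) - (d_{k-1} - d_k) = 1 - d_1 - d_{k-1} + d_k
\]
shows that the total weight on $\|E^0\|,\ldots,\|E^k\|$ on the right is exactly $3 - 2d_1$, a fixed constant in $(1,3)$ for $\alpha \in (1,2)$. Assuming inductively $\|E^j\| \leq C_j\|E^0\|$ for $j \leq k$, setting $C_0 = 1$ and $C_1 \leq 1$ from (57), one obtains $C_{k+1} \leq (3-2d_1)\max_{0\leq j\leq k}C_j$. A discrete Grönwall-type argument, using the fact that the weights $\sigma_m$ decay like $m^{-\alpha}$ and that the bulk of the weight lives on $\|E^k\|$ and $\|E^0\|$, then bounds $C_{k+1}$ uniformly in terms of $T$ and $\alpha$.

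The main obstacle is the last step: a naive induction gives $C_k \sim (3-2d_1)^k$, which blows up as $h_t \to 0$ for fixed $T$. To obtain a constant $C$ independent of the number of time steps one must exploit the specific decay structure of the L2 weights $d_j$ and $\sigma_m$ (both $O(m^{-\alpha})$ as $m\to\infty$) together with a fractional-order discrete Grönwall inequality; establishing such a bound, rather than mechanically chasing the maximum principle, is the technical heart of the lemma.
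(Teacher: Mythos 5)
Your proposal follows the paper's own route almost exactly: the error equation obtained from the scheme (49) by linearity, the maximum-principle absorption using $F_i^{k+1}+A_i^{k+1}+B_i^{k+1}=\gamma+c_i^{k+1}$ with $F_i^{k+1}\le 0$, the telescoping of $\sum_{m}\sigma_m$, and the resulting total weight $3-2d_1=5-2^{3-\alpha}$ all appear in the paper's argument. The difference is only in the last step. The paper assumes $\left\Vert E^{j}\right\Vert\le c_j\left\Vert E^{0}\right\Vert$ for $j\le k$, sets $C_{\max}=\max_j c_j$, and concludes $\left\Vert E^{k+1}\right\Vert\le C_{\max}\left(5-2^{3-\alpha}\right)\left\Vert E^{0}\right\Vert$, i.e. $c_{k+1}=\left(5-2^{3-\alpha}\right)C_{\max}$ --- exactly the multiplicative recursion you identify, with a factor lying in $(1,3)$. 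The paper stops there and declares the scheme stable; it does not exploit the decay of the $L2$ weights, does not invoke any discrete Gr\"{o}nwall inequality, and does not produce a constant independent of the number of time levels.

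So the gap you flag at the end is real, but it is not a gap between your argument and the paper's: it is present in both. Your write-up is explicit about being incomplete (you defer the uniform bound to an unproven fractional discrete Gr\"{o}nwall lemma), whereas the paper simply writes $C=C_{\max}\left(5-2^{3-\alpha}\right)$ as though this closed the induction, when in fact iterating it gives $c_k\sim\left(5-2^{3-\alpha}\right)^{k}$. To obtain a genuinely uniform constant one would need either the Gr\"{o}nwall-type machinery you sketch, or --- more in line with how $L2$-type schemes for $1<\alpha<2$ are usually treated --- a stability analysis in a norm involving the increments $e^{k+1}-e^{k}$, since the coefficient $2-d_1>1$ multiplying $\left\Vert E^{k}\right\Vert$ prevents the weights from summing to $1$ as they do in the subdiffusive case $0<\alpha<1$. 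In short: your reconstruction matches the paper up to and including its final displayed inequality, and your closing paragraph correctly diagnoses why that inequality does not by itself prove the stated lemma.
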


\begin{proof}
We use the mathematical induction.
\end{proof}
We assume $\left\Vert E^{j}\right\Vert \leq c_{j}\left\Vert E^{0}\right\Vert,$ and $C_{\max }=\max c_{j};$ \ where $c_{j}\succ 0,$ $j=\overline{1,k}$
from $\left( \ref{49}\right) $ we get%
\begin{equation}
F_{i}^{k+1}e_{i-1}^{k+1}+A_{i}^{k+1}e_{i}^{k+1}+B_{i}^{k+1}e_{i+1}^{k+1}=-%
\gamma \dsum\limits_{m=1}^{k-1}\sigma _{m}e_{i}^{k-m}+\gamma \left(
2-d_{1}\right) e_{_{i}}^{k}+\gamma \left( d_{k}-d_{k-1}\right) e_{i}^{0},%
\text{ }i=\overline{1,N-1},  \notag
\end{equation}
so%
\begin{eqnarray*}
\left( \gamma +c_{i}^{k+1}\right) \left\Vert E^{k+1}\right\Vert &\leq
&\left( \left( A_{i}^{k+1}+B_{i}^{k+1}\right) \left\Vert E^{k+1}\right\Vert
+F_{i}^{k+1}\left\vert e_{i-1}^{k+1}\right\vert \right) \\
&\leq &\left\Vert -\gamma \dsum\limits_{m=1}^{k-1}\sigma
_{m}e_{i}^{k-m}+\gamma \left( 2-d_{1}\right) e_{_{i}}^{k}+\gamma \left(
d_{k}-d_{k-1}\right) e_{i}^{0}\right\Vert \\
&\leq &\gamma \left( \dsum\limits_{m=1}^{k-1}\left\vert \sigma
_{m}\right\vert \left\Vert E_{i}^{k-m}\right\Vert +\left( 2-d_{1}\right)
\left\Vert e_{i}^{k}\right\Vert +\left( d_{k-1}-d_{k}\right) \left\Vert
e_{i}^{0}\right\Vert \right) \\
&\leq &\gamma C_{\max }\left( \dsum\limits_{m=1}^{k-1}\left\vert \sigma
_{m}\right\vert +2-d_{1}+d_{k-1}-d_{k}\right) \left\Vert E^{0}\right\Vert \\
&\leq &\gamma C_{\max }\left( 5-2^{3-\alpha }\right) \left\Vert
E^{0}\right\Vert ,
\end{eqnarray*}
where%
\begin{equation*}
\dsum\limits_{m=1}^{k-1}\left\vert \sigma _{m}\right\vert
+2-d_{1}+d_{k-1}-d_{k}=5-2^{3-\alpha },\text{ }0<\text{ }\sigma _{m}<1,\text{
}-1<d_{k}-d_{k-1}<0,\text{ }1<2-d_{1}<2\text{ }
\end{equation*}
then%
\begin{equation}
\left\Vert E^{k+1}\right\Vert \leq C\left\Vert E^{0}\right\Vert ;\text{ }%
C=C_{\max }\left( 5-2^{3-\alpha }\right) .  \label{58}
\end{equation}
Therefore the method is stable.

\subsection{Convergence}

Let $v(x_{i};t_{k+1})$ as the exact solution and $v_{i}^{k+1}$ is the
approximate solution of scheme $(\ref{38}),$ we put $%
v(x_{i};t_{k+1})-v_{i}^{k+1}=\epsilon _{i}^{k+1};$ for $i=\overline{1,N-1},$ 
$k=\overline{1,M-1}$. The scheme $L_{2}$ defined on $\left( \ref{37}\right) $verified $%
\left( \left[ 26\right] \right) $ 
\begin{equation}
\left\vert \frac{\partial ^{\alpha }v}{\partial t^{\alpha }}-\left( \frac{%
\partial ^{\alpha }v}{\partial t^{\alpha }}\right) _{\text{\ }%
L_{2}}\right\vert \leq O\left( h_{t}\right)  \label{59}
\end{equation}
substitution into $(\ref{38})$ and using $(\ref{35}),$ $(\ref{59})$ leads to%
\begin{equation*}
\gamma \dsum\limits_{j\text{ }=\text{ }0}^{k}\left(
v(x_{i};t_{k-j-1})-\epsilon _{i}^{k-j-1}-2\left( v(x_{i};t_{k-j})-\epsilon
_{i}^{k-j}\right) +\left( v(x_{i};t_{k-j+1})-\epsilon _{i}^{k-j+1}\right)
\right) d_{j}
\end{equation*}%
\begin{equation*}
+L\left( v(x_{i};t_{k+1})-\epsilon _{i}^{k+1}\right) =g_{i}^{\text{ }k+1}.
\end{equation*}
then%
\begin{equation*}
\gamma \dsum\limits_{j\text{ }=\text{ }0}^{k}\left(
v(x_{i};t_{k-j-1})-2v(x_{i};t_{k-j})+\left( v(x_{i};t_{k-j+1})\right)
\right) d_{j}+Lv(x_{i};t_{k+1})
\end{equation*}%
\begin{equation*}
-\gamma \dsum\limits_{j\text{ }=\text{ }0}^{k}\left( \epsilon
_{i}^{k-j-1}-2\epsilon _{i}^{k-j}+\epsilon _{i}^{k-j+1}\right)
d_{j}-L\epsilon _{i}^{k+1}=g_{i}^{\text{ }k+1}
\end{equation*}
so%
\begin{equation*}
\frac{\partial ^{\alpha }v(x,t)}{\partial t^{\alpha }}%
+O(h_{t})+Lv(x;t)+O(h)-\gamma \dsum\limits_{j\text{ }=\text{ }0}^{k}\left(
\epsilon _{i}^{k-j-1}-2\epsilon _{i}^{k-j}+\epsilon _{i}^{k-j+1}\right)
d_{j}-L\epsilon _{i}^{k+1}=g_{i}^{\text{ }k+1}.
\end{equation*}
hence%
\begin{equation}
\gamma \dsum\limits_{j\text{ }=\text{ }0}^{k}\left( \epsilon
_{i}^{k-j-1}-2\epsilon _{i}^{k-j}+\epsilon _{i}^{k-j+1}\right)
d_{j}+L\epsilon _{i}^{k+1}=O(h+h_{t}).  \label{60}
\end{equation}
Taking%
\begin{equation*}
\left\vert \epsilon _{l}^{k}\right\vert =\left\Vert \epsilon ^{k}\right\Vert
=\limfunc{Max}_{\text{ }1\text{ }\leq \text{ }i\text{ }\leq \text{ }%
N-1}\left\vert \epsilon _{i}^{k}\right\vert ;\epsilon ^{k}=\left( \epsilon
_{1}^{k},,...,\epsilon _{N-1}^{k}\right) ^{T};\text{ }\left\Vert \epsilon
_{i}^{0}\right\Vert =0
\end{equation*}
for $k=0$ we get 
\begin{equation}
F_{i}^{1}\epsilon _{i-1}^{1}+A_{i}^{1}\epsilon _{i}^{1}+B_{i}^{1}\epsilon
_{i+1}^{1}=\gamma \epsilon _{i}^{0}+O(h+h_{t})\text{\quad with\quad }i=%
\overline{1,N-1},  \label{61}
\end{equation}
we have%
\begin{eqnarray*}
\left\Vert \epsilon ^{1}\right\Vert &=&\left\vert \epsilon
_{l}^{1}\right\vert \leq \left( F_{i}^{1}+A_{i}^{1}+B_{i}^{1}\right)
\left\vert \epsilon _{l}^{1}\right\vert \\
&\leq &\left( \left( A_{i}^{1}+B_{i}^{1}\right) \left\vert \epsilon
_{l}^{1}\right\vert +F_{i}^{1}\left\vert \epsilon _{l}^{1}\right\vert \right)
\\
&\leq &\limfunc{Max}_{\text{ }1\text{ }\leq \text{ }i\text{ }\leq \text{ }%
N-1}\left\vert F_{i}^{1}\epsilon _{i-1}^{1}+A_{i}^{1}\epsilon
_{l}^{1}+B_{i}^{1}\epsilon _{l}^{1}\right\vert =O(h+h_{t}).
\end{eqnarray*}
hence%
\begin{equation}
\left\Vert \epsilon ^{1}\right\Vert \leq O(h+h_{t}).  \label{62}
\end{equation}
We assume : $\left\vert \epsilon _{l}^{j}\right\vert \leq O(h+h_{t})$; $j=%
\overline{1,k}$ from $(\ref{60})$ we get 
\begin{equation}
F_{i}^{k+1}\epsilon _{i-1}^{k+1}+A_{i}^{k+1}\epsilon
_{i}^{k+1}+B_{i}^{k+1}\epsilon _{i+1}^{k+1}=-\gamma
\dsum\limits_{m=1}^{k-1}\sigma _{m}\epsilon _{i}^{k-m}+\gamma \left(
2-d_{1}\right) \epsilon _{i}^{k}+O(h+h_{t})  \label{63}
\end{equation}
we have%
\begin{eqnarray*}
\left\Vert \epsilon ^{k+1}\right\Vert &\leq &\left( \gamma
+c_{i}^{k+1}\right) \left\vert \epsilon _{l}^{k+1}\right\vert =\left(
F_{i}^{k+1}+A_{i}^{k+1}+B_{i}^{k+1}\right) \left\vert \epsilon
_{l}^{k+1}\right\vert \\
&\leq &\left( F_{i}^{k+1}\left\vert \epsilon _{i-1}^{k+1}\right\vert +\left(
A_{i}^{k+1}+B_{i}^{k+1}\right) \left\vert \epsilon _{l}^{k+1}\right\vert
\right) \\
&\leq &\limfunc{Max}_{\text{ }1\text{ }\leq \text{ }i\text{ }\leq \text{ }%
N-1}\left\vert -\gamma \dsum\limits_{m=1}^{k-1}\sigma _{m}\epsilon
_{i}^{k-m}+\gamma \left( 2-d_{1}\right) \epsilon
_{i}^{k}+O(h+h_{t})\right\vert \\
&\leq &\gamma \dsum\limits_{m=1}^{k-1}\sigma _{m}\left\Vert \epsilon
^{k-m}\right\Vert +\gamma \left( 2-d_{1}\right) \left\Vert \epsilon
^{k}\right\Vert +O(h+h_{t}) \\
&\leq &\gamma \left( \dsum\limits_{m=1}^{k-1}\sigma _{m}+\left(
2-d_{1}\right) \right) O(h+h_{t})+O(h+h_{t})
\end{eqnarray*}
hence
\begin{equation}
\left\Vert \epsilon ^{k+1}\right\Vert \leq \frac{\gamma }{\gamma +c_{i}^{k+1}%
}O(h+h_{t})+\frac{1}{\gamma +c_{i}^{k+1}}O(h+h_{t})\leq O(h+h_{t}).
\label{64}
\end{equation}
Therefore, the method is convergent.

\section{Applications}

In this section, we give some numerical investigation tests.

\begin{example}
We consider a problem $(\ref{1}-\ref{3})$ with $\alpha =\frac{3}{2},\quad a(x,t)=-x-t,\quad $
$\displaystyle b(x,t)=x+t,\quad c(x)=2,\quad g(x,t)=\left(\frac{3}{4}\sqrt{\pi }+2t\sqrt{t}%
\right)e^{x},\quad \phi (x)=\psi (x)=0,$ $\mu (t)=\displaystyle (e-1)t^{\frac{3}{2}},\quad E(t)=t^{\frac{3}{2}}.$
The analytical solution is given by $v(x,t)=t^{\frac{3}{2}}e^{x}.$
\end{example}

The approximate solution $u(x,t)$ with \textbf{A. E} is the absolute error. \
\begin{center}
Table 1. $h=0.1;$ $h_{t}=0.01$  \\

$%
\begin{tabular}[t]{|c|c|}
\hline
$h$ & $\ v^{1}(x,t)$ \\ \hline
$0.1$ & $1.1052e-03$ \\ \hline
$0.2$ & $1.2214e-03$ \\ \hline
$0.3$ & $1.3499e-03$ \\ \hline
$0.4$ & $1.4918e-03$ \\ \hline
$0.5$ & $1.6487e-03$ \\ \hline
$0.6$ & $1.8221e-03$ \\ \hline
$0.7$ & $2.0138e-03$ \\ \hline
$0.8$ & $2.2255e-03$ \\ \hline
$0.9$ & $2.4596e-03$ \\ \hline
\end{tabular}%
\begin{tabular}[t]{|c|}
\hline
\ $\ u^{1}(x,t)$ \\ \hline
$1.3042e-03$ \\ \hline
$1.4523e-03$ \\ \hline
$1.6038e-03$ \\ \hline
$1.7710e-03$ \\ \hline
$1.9558e-03$ \\ \hline
$2.1600e-03$ \\ \hline
$2.3851e-03$ \\ \hline
$2.6235e-03$ \\ \hline
$2.7079e-03$ \\ \hline
\end{tabular}%
\begin{tabular}[t]{|c|}
\hline
\ $\mathbf{A.E}$ \\ \hline
$1.99e-04$ \\ \hline
$2.30e-04$ \\ \hline
$2.53e-04$ \\ \hline
$2.79e-04$ \\ \hline
$3.07e-04$ \\ \hline
$3.38e-04$ \\ \hline
$3.71e-04$ \\ \hline
$3.98e-04$ \\ \hline
$2.48e-04$ \\ \hline
\end{tabular}%
$
\begin{figure}{h}
	\centering
	\includegraphics [width=0.9\textwidth]{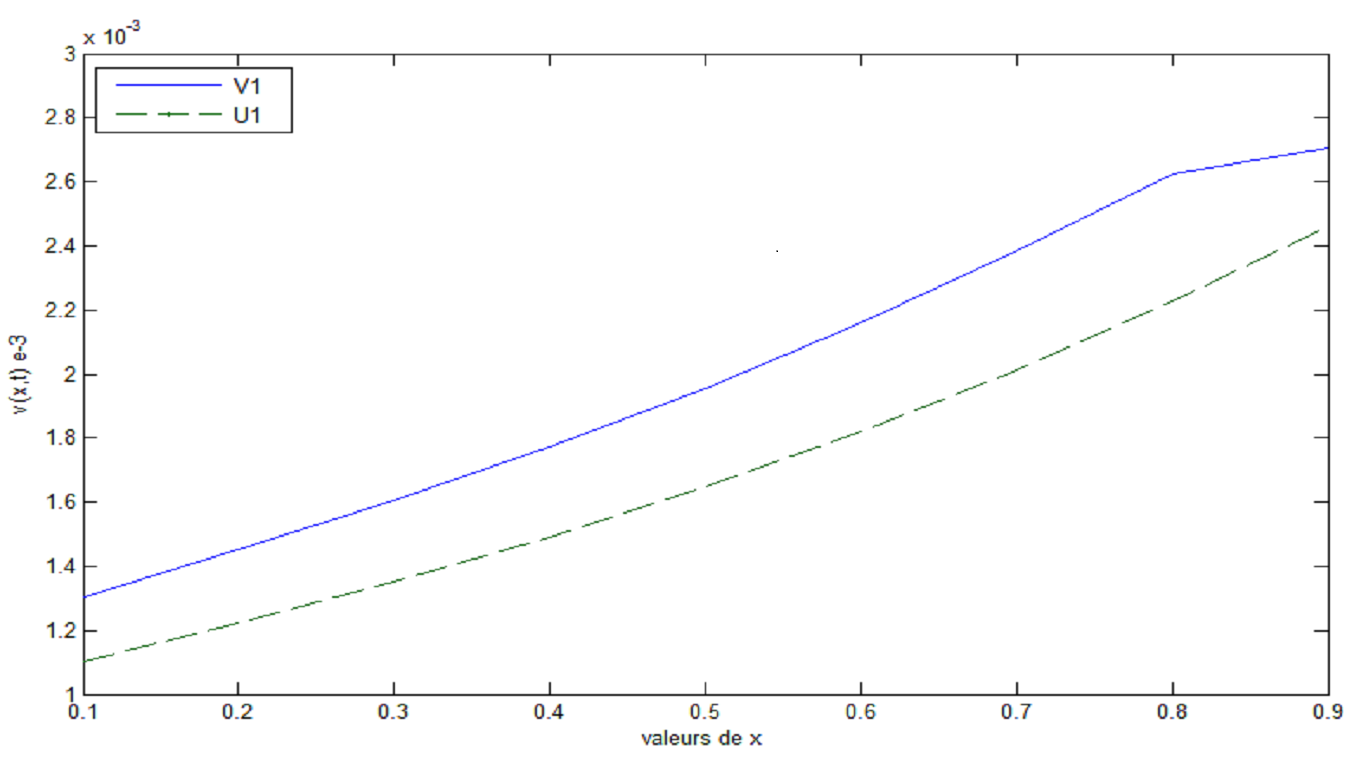}
	\caption{\ $\alpha =1.5$, $h_{t}=0.01$}\label{fig:(1)}
\end{figure}
\end{center}
\newpage 

\begin{center}
	
Table 2. $h=0.1$; $h_{t}\mathbf{=}0.001$ \\

\vspace{0.3in} 
$%
\begin{tabular}{|c|c|}
\hline
$h$ & $\ v^{1}(x,t)$ \\ \hline
$0.1$ & $3.4949e-05$ \\ \hline
$0.2$ & $3.8624e-05$ \\ \hline
$0.3$ & $4.2686e-05$ \\ \hline
$0.4$ & $4.7176e-05$ \\ \hline
$0.5$ & $5.2137e-05$ \\ \hline
$0.6$ & $5.7620e-05$ \\ \hline
$0.7$ & $6.3680e-05$ \\ \hline
$0.8$ & $7.0378e-05$ \\ \hline
$0.9$ & $7.7802e-05$ \\ \hline
\end{tabular}%
\begin{tabular}{|c|}
\hline
$u^{1}(x,t)$ \\ \hline
$4.1175e-05$ \\ \hline
$4.5515e-05$ \\ \hline
$5.0301e-05$ \\ \hline
$5.5590e-05$ \\ \hline
$6.1435e-05$ \\ \hline
$6.7895e-05$ \\ \hline
$7.5034e-05$ \\ \hline
$8.2923e-05$ \\ \hline
$9.1415e-05$ \\ \hline
\end{tabular}%
\begin{tabular}{|l|}
\hline
$\ \ \ \ \ \ \ \mathbf{A.E}$ \\ \hline
\multicolumn{1}{|c|}{$6.23e-06$} \\ \hline
\multicolumn{1}{|c|}{$6.89e-06$} \\ \hline
\multicolumn{1}{|c|}{$7.61e-06$} \\ \hline
\multicolumn{1}{|c|}{$8.41e-06$} \\ \hline
\multicolumn{1}{|c|}{$9.30e-06$} \\ \hline
\multicolumn{1}{|c|}{$1.03e-05$} \\ \hline
\multicolumn{1}{|c|}{$1.14e-05$} \\ \hline
\multicolumn{1}{|c|}{$1.25e-05$} \\ \hline
\multicolumn{1}{|c|}{$1.36e-05$} \\ \hline
\end{tabular}%
$

\unskip
\begin{figure}
	\centering
	\includegraphics [width=0.9\textwidth]{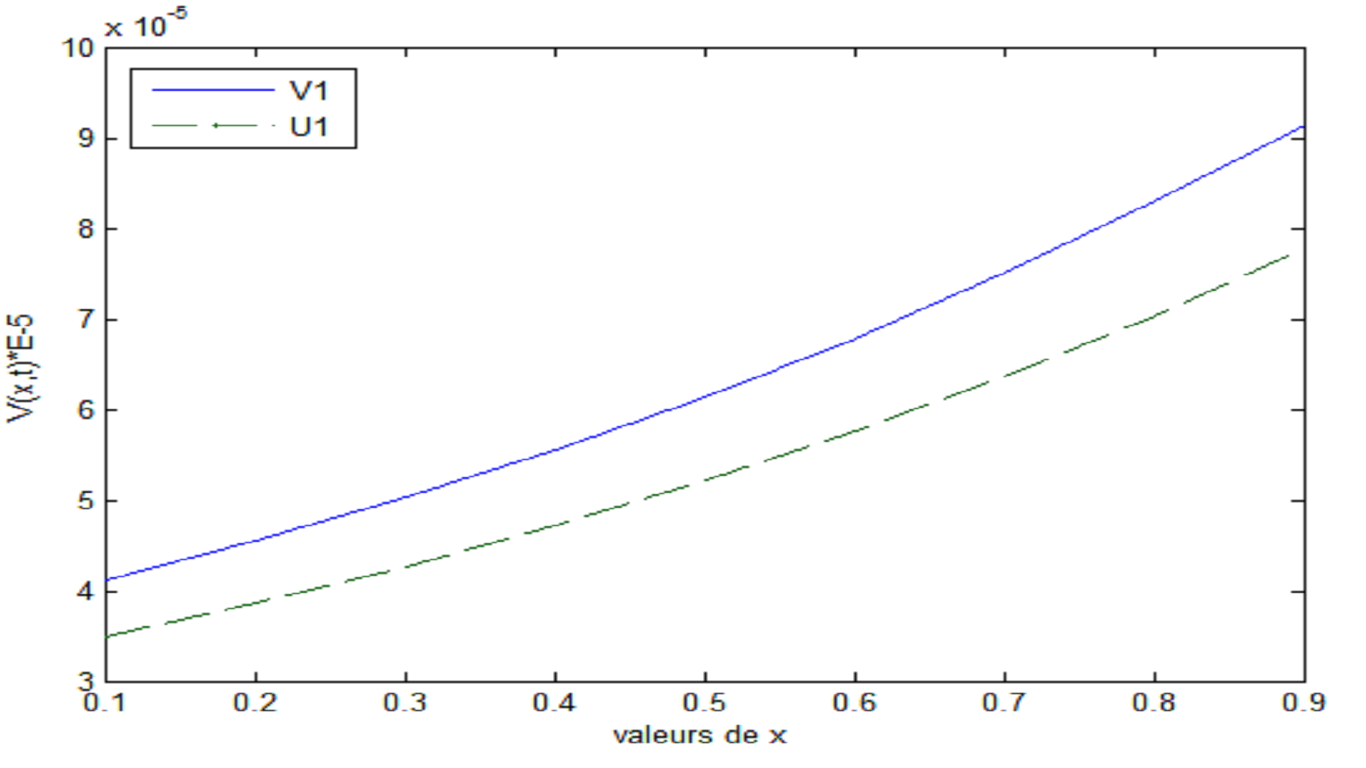}
	\caption{\ $\alpha =1.5$, $h_{t}=0.001$}\label{fig:(2)}
\end{figure}
\end{center} 
\unskip

\begin{center}
Table 3. $h=0.1;$ $h_{t}\mathbf{=0.0001}$ \\

\vspace{0.3in}
$%
\begin{tabular}{|c|c|}
\hline
$h$ & $\ v^{1}(x,t)$ \\ \hline
$0.1$ & $1.1052e-06$ \\ \hline
$0.2$ & $1.2214e-06$ \\ \hline
$0.3$ & $1.3499e-06$ \\ \hline
$0.4$ & $1.4918e-06$ \\ \hline
$0.5$ & $1.6487e-06$ \\ \hline
$0.6$ & $1.8221e-06$ \\ \hline
$0.7$ & $2.0138e-06$ \\ \hline
$0.8$ & $2.2255e-06$ \\ \hline
$0.9$ & $2.4596e-06$ \\ \hline
\end{tabular}%
\begin{tabular}{|c|}
\hline
$u^{1}(x,t)$ \\ \hline
$1.3020e-06$ \\ \hline
$1.4389e-06$ \\ \hline
$1.5903e-06$ \\ \hline
$1.7575e-06$ \\ \hline
$1.9424e-06$ \\ \hline
$2.1466e-06$ \\ \hline
$2.3724e-06$ \\ \hline
$2.6219e-06$ \\ \hline
$2.8974e-06$ \\ \hline
\end{tabular}%
\begin{tabular}{|l|}
\hline
$\ \mathbf{A.E}$ \\ \hline
\multicolumn{1}{|c|}{$1e-07$} \\ \hline
\multicolumn{1}{|c|}{$2e-07$} \\ \hline
\multicolumn{1}{|c|}{$2e-07$} \\ \hline
\multicolumn{1}{|c|}{$2e-07$} \\ \hline
\multicolumn{1}{|c|}{$2e-07$} \\ \hline
\multicolumn{1}{|c|}{$3e-07$} \\ \hline
\multicolumn{1}{|c|}{$3e-07$} \\ \hline
\multicolumn{1}{|c|}{$3e-07$} \\ \hline
\multicolumn{1}{|c|}{$4e-07$} \\ \hline
\end{tabular}%
$

\begin{figure}
	\centering
	\includegraphics [width=0.8\textwidth]{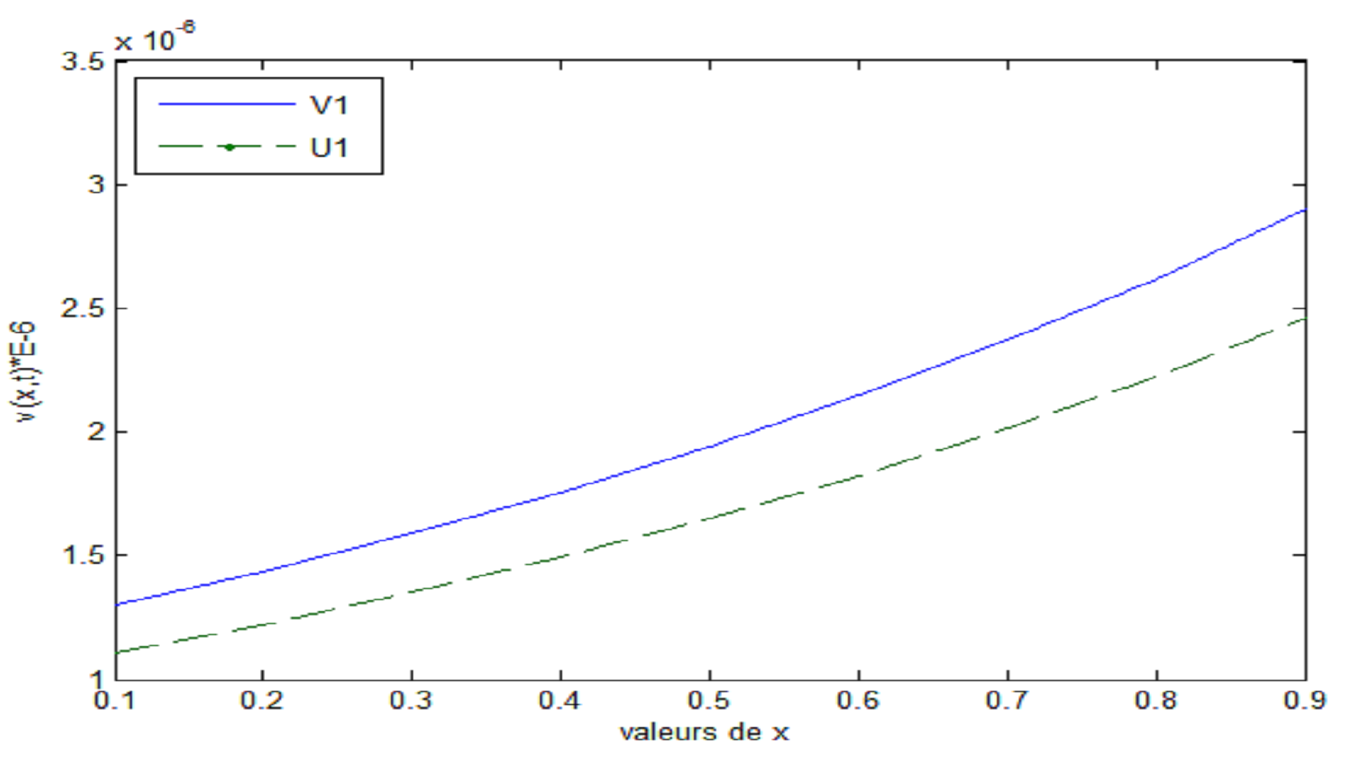}
	\caption{ \ $\alpha =1.5$, $h_{t}=0.0001$}\label{fig:(3)}
\end{figure}
\end{center}
We see in Figures 1, 2 and 3 that the absolute errors(A.E.) decreases when the
step $h_{t}$ takes small values very close to zero. that is, for $%
h_{t}=0.01$, $h_{t}=0.001$, $h_{t}=0.0001$ A.E decreases towards zero and
the approximate solution tends towards the exact solution with convergence
order of $O(h+h_{t}).$
\newpage

For $k=1$ \ $\left( \text{second iteration}\right) $
\begin{center}
Table 4 shows the absolute error for space step $h=0.1$.\\ 

\vspace{0.3in}
$%
\begin{tabular}{|l|l|}
\hline
$h$ & $\ h_{t}=\mathbf{10}^{-2}\ \ \ $ \\ \hline
$0.1$ & \multicolumn{1}{|c|}{$1.84e-03$} \\ \hline
$0.2$ & \multicolumn{1}{|c|}{$1.74e-03$} \\ \hline
$0.3$ & \multicolumn{1}{|c|}{$1.62e-03$} \\ \hline
$0.4$ & \multicolumn{1}{|c|}{$1.49e-03$} \\ \hline
$0.5$ & \multicolumn{1}{|c|}{$1.33e-03$} \\ \hline
$0.6$ & \multicolumn{1}{|c|}{$1.17e-03$} \\ \hline
$0.7$ & \multicolumn{1}{|c|}{$9.82e-04$} \\ \hline
$0.8$ & \multicolumn{1}{|c|}{$7.40e-04$} \\ \hline
$0.9$ & \multicolumn{1}{|c|}{$1.26e-04$} \\ \hline
\end{tabular}%
\begin{tabular}{|l|}
\hline
$\ \ h_{t}=\mathbf{10}^{-3}$ \  \\ \hline
\multicolumn{1}{|c|}{$5.80e-05$} \\ \hline
\multicolumn{1}{|c|}{$5.45e-05$} \\ \hline
\multicolumn{1}{|c|}{$5.06e-05$} \\ \hline
\multicolumn{1}{|c|}{$4.63e-05$} \\ \hline
\multicolumn{1}{|c|}{$4.15e-05$} \\ \hline
\multicolumn{1}{|c|}{$3.63e-05$} \\ \hline
\multicolumn{1}{|c|}{$3.05e-05$} \\ \hline
\multicolumn{1}{|c|}{$2.40e-05$} \\ \hline
\multicolumn{1}{|c|}{$1.64e-05$} \\ \hline
\end{tabular}%
\begin{tabular}{|l|}
\hline
\ $\ \ h_{t}=\mathbf{10}^{-5}$ \  \\ \hline
\multicolumn{1}{|c|}{$5.80e-08$} \\ \hline
\multicolumn{1}{|c|}{$5.45e-08$} \\ \hline
\multicolumn{1}{|c|}{$5.06e-08$} \\ \hline
\multicolumn{1}{|c|}{$4.63e-08$} \\ \hline
\multicolumn{1}{|c|}{$4.16e-08$} \\ \hline
\multicolumn{1}{|c|}{$3.63e-08$} \\ \hline
\multicolumn{1}{|c|}{$3.05e-08$} \\ \hline
\multicolumn{1}{|c|}{$2.41e-08$} \\ \hline
\multicolumn{1}{|c|}{$1.69e-08$} \\ \hline
\end{tabular}%
$%

\begin{figure}
	\begin{subfigure}{.3\textwidth}
		\centering
		\includegraphics[width=3.5cm,height=4.5cm]{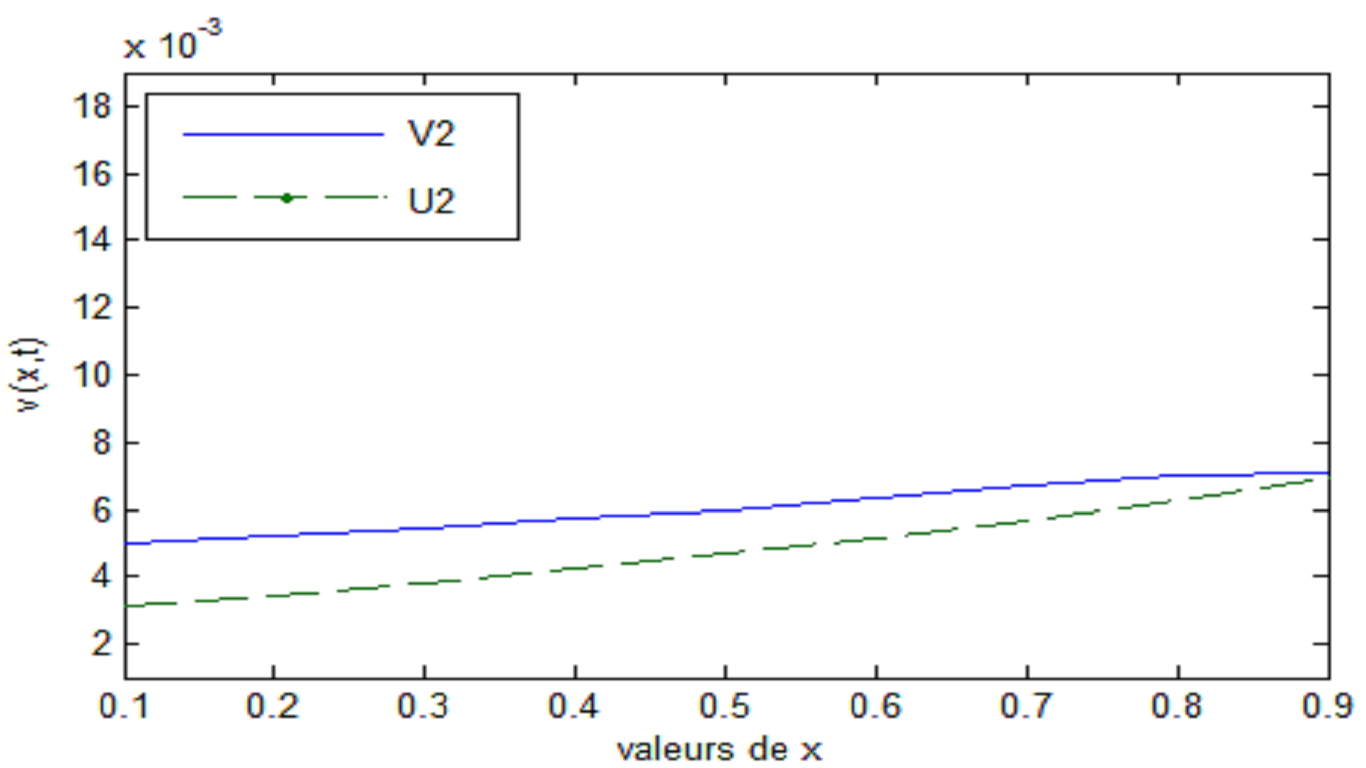}
		\caption{FIGURE 4.\ \ \ \ }$h_{t}=\mathbf{10}^{-2}$\label{fig:(2A)}
	\end{subfigure}\hfill
	\begin{subfigure}{.3\textwidth}
		\centering
		\includegraphics[width=3.5cm,height=4cm]{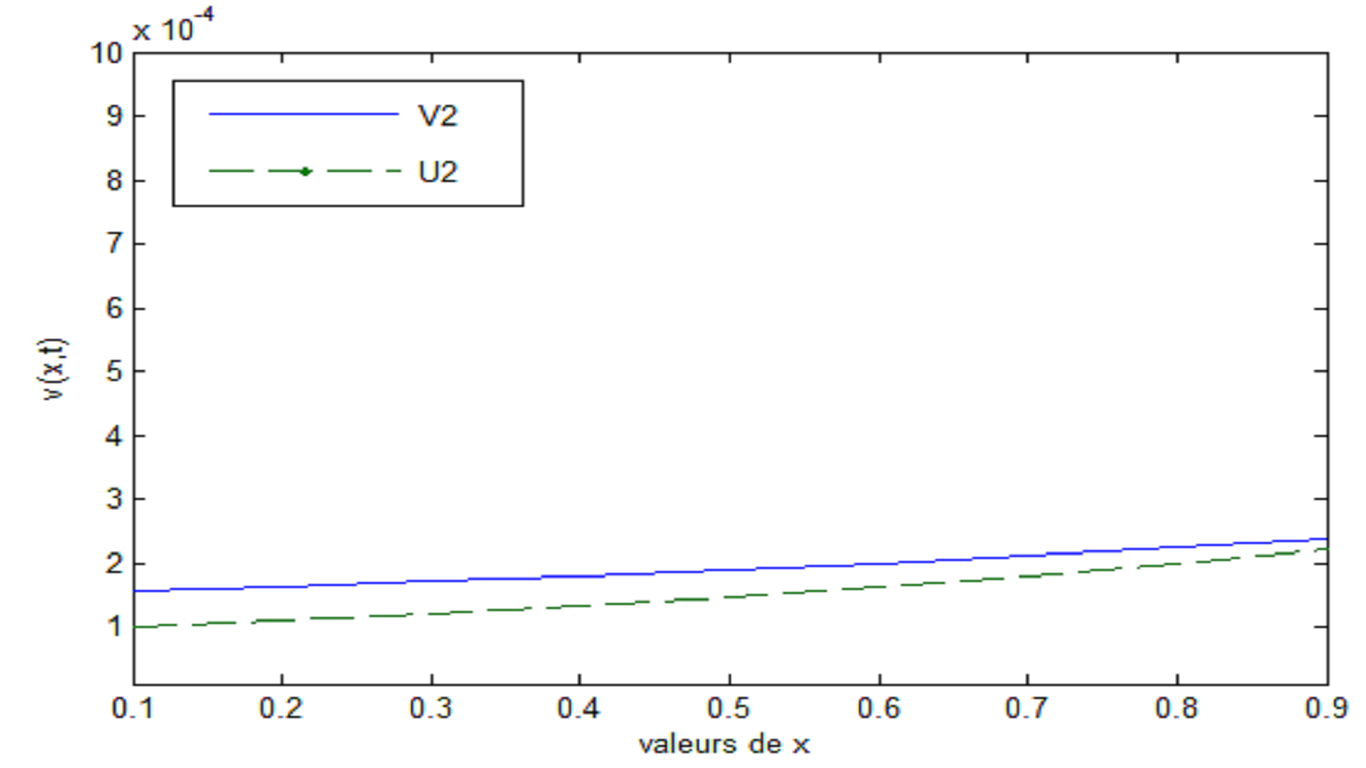}
		\caption{FIGURE 5.\ \ \ \ \ }$h_{t}=\mathbf{10}^{-3}$\label{fig:(2B)}
	\end{subfigure}\hfill
	\begin{subfigure}{.3\textwidth}
		\centering
		\includegraphics[width=3.5cm,height=4cm]{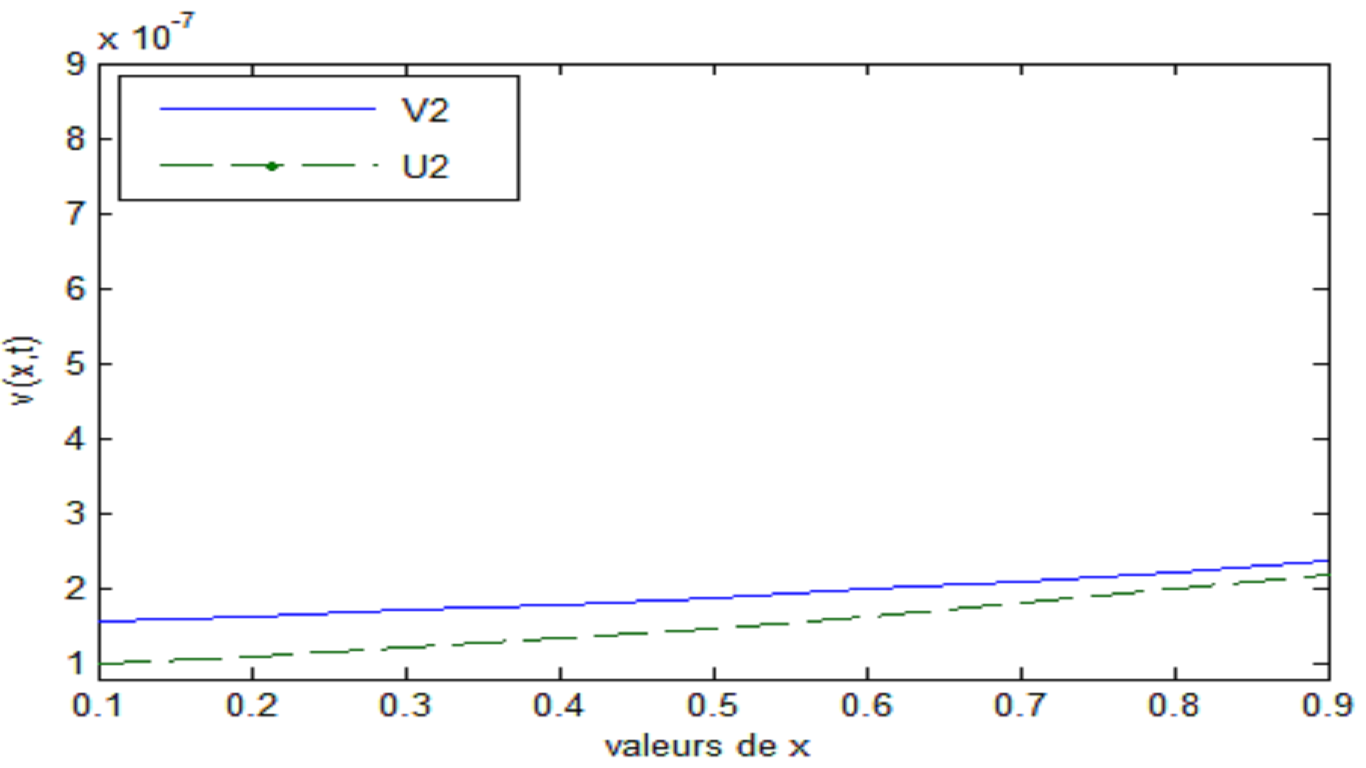}
		\caption{FIGURE 6.\ \ \ \ \ \ }$h_{t}=\mathbf{10}^{-5}$\label{fig:(2C)}
	\end{subfigure}
\end{figure}
\end{center}

Table 4 shows the absolute error decreases to zero and Fig 4,5, and 6 show
the approximate solution $u{{}^2}$ after two steps $2h_{t}$ tends to the exact solution when $h_{t}$
close to zero, with convergence order $O(h+h_{t}).$

\begin{center}
$\overset{%
\begin{array}{cc}
& \text{Table 5. The absolute error for }h\mathbf{=0.01};\text{ }h_{t}=%
\mathbf{10}^{-3}\ \text{\ } \\ 
& 
\begin{tabular}[t]{|c|c|c|c|c|c|c|c|c|c|c|}
\hline
$i=\overline{1,9}$ & $\overline{10,18}$ & $\overline{19,27}$ & $\overline{%
28,36}$ & $\overline{37,45}$ & $\overline{46,54}$ & $\overline{55,63}$ & $%
\overline{64,72}$ & $\overline{73,81}$ & $\overline{82,89}$ & $\overline{%
90,99}$ \\ \hline
5*10$^{-6}$ & 6*10$^{-6}$ & 6*10$^{-6}$ & 7*10$^{-6}$ & 8*10$^{-6}$ & 9*10$%
^{-6}$ & 10$^{-5}$ & 10$^{-5}$ & 10$^{-5}$ & 10$^{-5}$ & 10$^{-5}$ \\ \hline
5*10$^{-6}$ & 6*10$^{-6}$ & 7*10$^{-6}$ & 7*10$^{-6}$ & 8*10$^{-6}$ & 9*10$%
^{-6}$ & 10$^{-5}$ & 10$^{-5}$ & 10$^{-5}$ & 10$^{-5}$ & 10$^{-5}$ \\ \hline
5*10$^{-6}$ & 6*10$^{-6}$ & 7*10$^{-6}$ & 7*10$^{-6}$ & 8*10$^{-6}$ & 9*10$%
^{-6}$ & 10$^{-5}$ & 10$^{-5}$ & 10$^{-5}$ & 10$^{-5}$ & 10$^{-5}$ \\ \hline
6*10$^{-6}$ & 6*10$^{-6}$ & 7*10$^{-6}$ & 7*10$^{-6}$ & 8*10$^{-6}$ & 9*10$%
^{-6}$ & 10$^{-5}$ & 10$^{-5}$ & 10$^{-5}$ & 10$^{-5}$ & 10$^{-5}$ \\ \hline
6*10$^{-6}$ & 6*10$^{-6}$ & 7*10$^{-6}$ & 7*10$^{-6}$ & 8*10$^{-6}$ & 9*10$%
^{-6}$ & 10$^{-5}$ & 10$^{-5}$ & 10$^{-5}$ & 10$^{-5}$ & 10$^{-5}$ \\ \hline
6*10$^{-6}$ & 6*10$^{-6}$ & 7*10$^{-6}$ & 7*10$^{-6}$ & 8*10$^{-6}$ & 9*10$%
^{-6}$ & 10$^{-5}$ & 10$^{-5}$ & 10$^{-5}$ & 10$^{-5}$ & 10$^{-5}$ \\ \hline
6*10$^{-6}$ & 6*10$^{-6}$ & 7*10$^{-6}$ & 8*10$^{-6}$ & 8*10$^{-6}$ & 9*10$%
^{-6}$ & 10$^{-5}$ & 10$^{-5}$ & 10$^{-5}$ & 10$^{-5}$ & 10$^{-5}$ \\ \hline
6*10$^{-6}$ & 6*10$^{-6}$ & 7*10$^{-6}$ & 8*10$^{-6}$ & 8*10$^{-6}$ & 9*10$%
^{-6}$ & 10$^{-5}$ & 10$^{-5}$ & 10$^{-5}$ & 10$^{-5}$ & 10$^{-5}$ \\ \hline
6*10$^{-6}$ & 6*10$^{-6}$ & 7*10$^{-6}$ & 8*10$^{-6}$ & 8*10$^{-6}$ & 9*10$%
^{-6}$ & 10$^{-5}$ & 10$^{-5}$ & 10$^{-5}$ & 10$^{-5}$ & 10$^{-5}$ \\ \hline
\end{tabular}%
\end{array}%
}{}$ 

\begin{figure}
	\centering
	\includegraphics [width=0.9\textwidth]{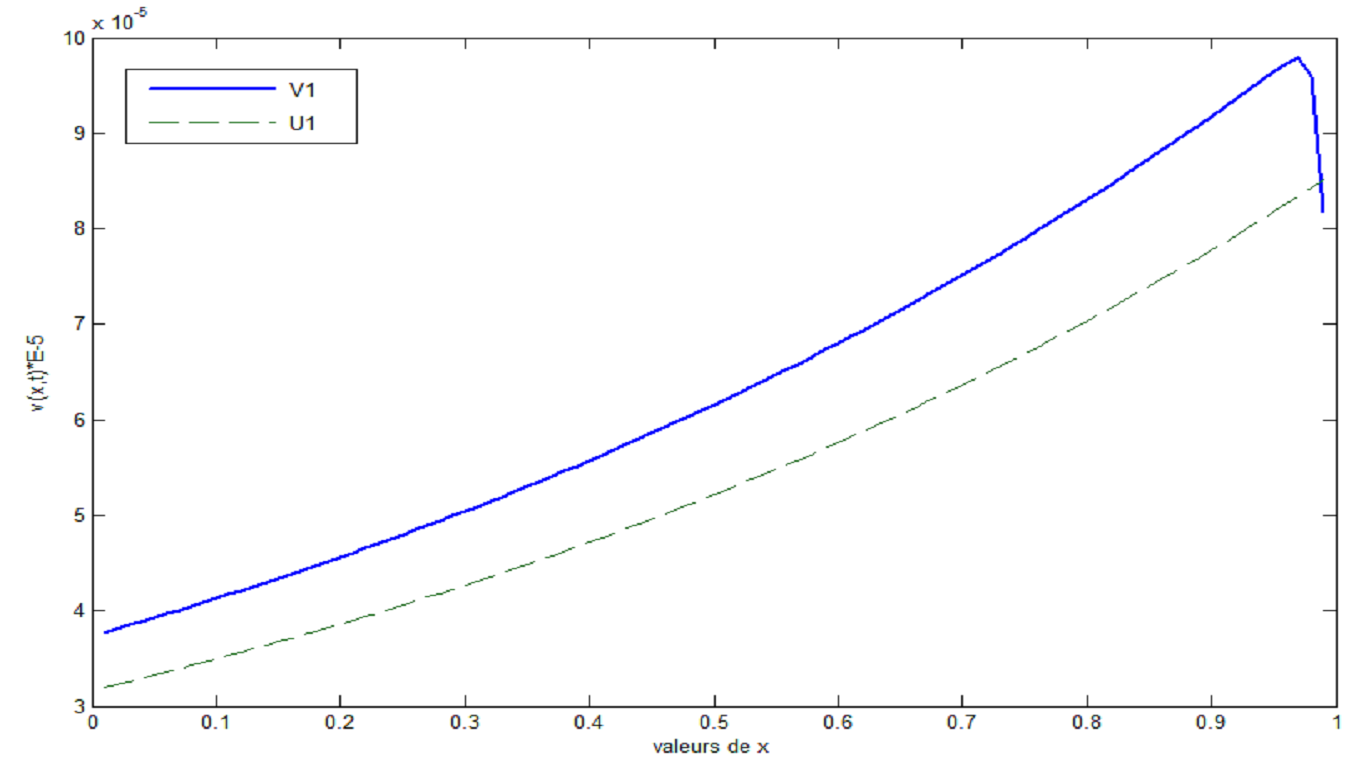}
	{FIGURE 7,  $\alpha =1.5$, $h_{t}=10^{-3}$}\label{fig:(7)}
\end{figure}
\end{center}
\begin{center}
	
$\bigskip \overset{%
\begin{array}{ll}
& \text{Table 6. The absolute error for }h=0.01;\text{ }h_{t}\mathbf{=10}%
^{-5}\ \text{\ } \\ \vspace{0.5in}
& 
\begin{tabular}[t]{|c|c|c|c|c|c|c|c|c|c|c|}
\hline
$i=\overline{1,9}$ & $\overline{10,18}$ & $\overline{19,27}$ & $\overline{%
28,36}$ & $\overline{37,45}$ & $\overline{46,54}$ & $\overline{55,63}$ & $%
\overline{64,72}$ & $\overline{73,81}$ & $\overline{82,89}$ & $\overline{%
90,99}$ \\ \hline
5*10$^{-9}$ & 6*10$^{-9}$ & 6*10$^{-9}$ & 7*10$^{-9}$ & 8*10$^{-9}$ & 8*10$%
^{-9}$ & 9*10$^{-9}$ & 10$^{-8}$ & 10$^{-8}$ & 10$^{-8}$ & 10$^{-8}$ \\ 
\hline
5*10$^{-9}$ & 6*10$^{-9}$ & 6*10$^{-9}$ & 7*10$^{-9}$ & 8*10$^{-9}$ & 9*10$%
^{-9}$ & 9*10$^{-9}$ & 10$^{-8}$ & 10$^{-8}$ & 10$^{-8}$ & 10$^{-8}$ \\ 
\hline
5*10$^{-9}$ & 6*10$^{-9}$ & 6*10$^{-9}$ & 7*10$^{-9}$ & 8*10$^{-9}$ & 9*10$%
^{-9}$ & 9*10$^{-9}$ & 10$^{-8}$ & 10$^{-8}$ & 10$^{-8}$ & 10$^{-8}$ \\ 
\hline
5*10$^{-9}$ & 6*10$^{-9}$ & 7*10$^{-9}$ & 7*10$^{-9}$ & 8*10$^{-9}$ & 9*10$%
^{-9}$ & 10$^{-9}$ & 10$^{-8}$ & 10$^{-8}$ & 10$^{-8}$ & 10$^{-8}$ \\ \hline
5*10$^{-9}$ & 6*10$^{-9}$ & 7*10$^{-9}$ & 7*10$^{-9}$ & 8*10$^{-9}$ & 9*10$%
^{-9}$ & 10$^{-9}$ & 10$^{-8}$ & 10$^{-8}$ & 10$^{-8}$ & 10$^{-8}$ \\ \hline
5*10$^{-9}$ & 6*10$^{-9}$ & 7*10$^{-9}$ & 7*10$^{-9}$ & 8*10$^{-9}$ & 9*10$%
^{-9}$ & 10$^{-9}$ & 10$^{-8}$ & 10$^{-8}$ & 10$^{-8}$ & 10$^{-8}$ \\ \hline
6*10$^{-9}$ & 6*10$^{-9}$ & 7*10$^{-9}$ & 7*10$^{-9}$ & 8*10$^{-9}$ & 9*10$%
^{-9}$ & 10$^{-9}$ & 10$^{-8}$ & 10$^{-8}$ & 10$^{-8}$ & 10$^{-8}$ \\ \hline
6*10$^{-9}$ & 6*10$^{-9}$ & 7*10$^{-9}$ & 7*10$^{-9}$ & 8*10$^{-9}$ & 9*10$%
^{-9}$ & 10$^{-9}$ & 10$^{-8}$ & 10$^{-8}$ & 10$^{-8}$ & 10$^{-8}$ \\ \hline
6*10$^{-9}$ & 6*10$^{-9}$ & 7*10$^{-9}$ & 8*10$^{-9}$ & 8*10$^{-9}$ & 9*10$%
^{-9}$ & 10$^{-9}$ & 10$^{-8}$ & 10$^{-8}$ & 10$^{-8}$ & 10$^{-8}$ \\ \hline
\end{tabular}%
\end{array}%
}{}$%
\end{center}

\begin{figure}
	\centering
	\includegraphics [width=0.9\textwidth]{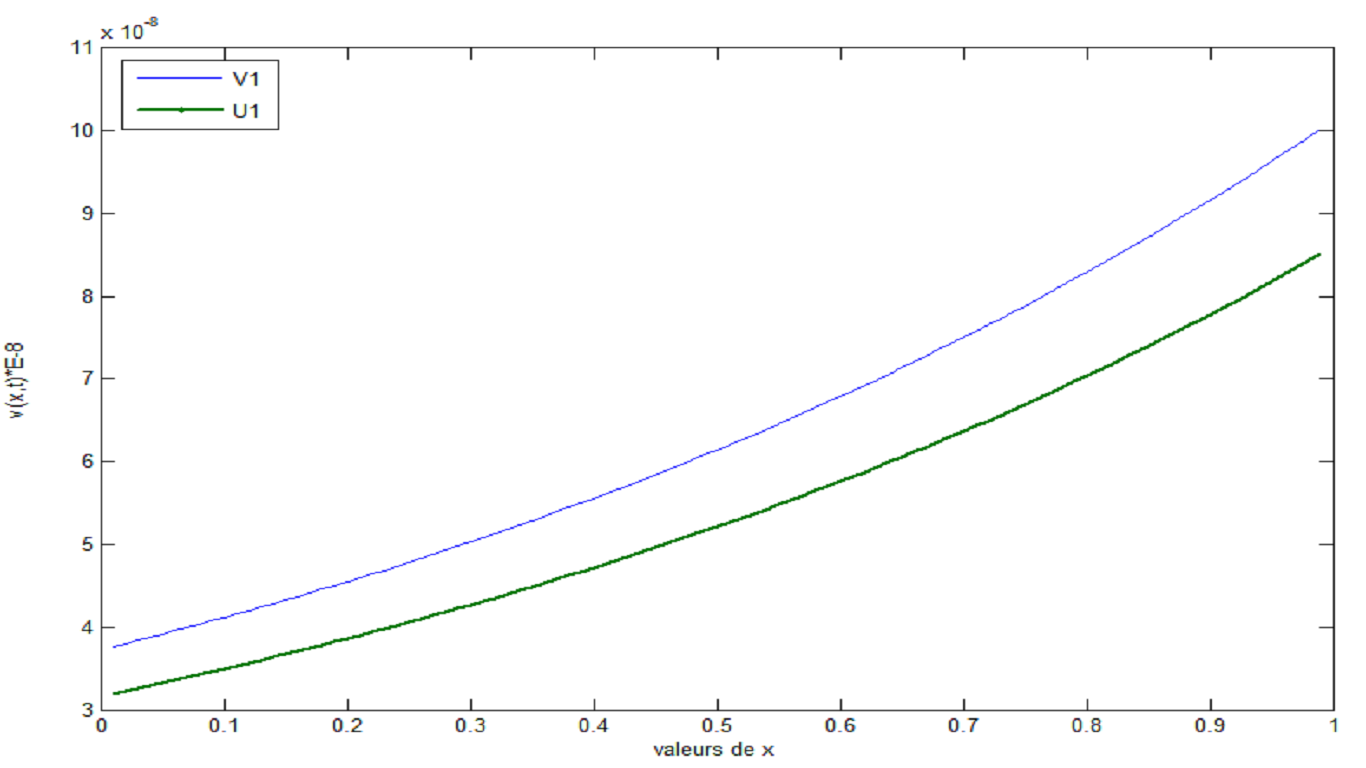} \\
	{FIGURE 8 \ $\alpha =1.5$, $h_{t}=10^{-5}$}\label{fig:(8)}
\end{figure}

From tables 5, 6 and Fig 7, 8 with space step $h=\mathbf{0.01},$ we see that
the approximate solution $u{{}^1}$ tends to the exact solution $v{{}^1}$
when the step $h_{t}$ $(h_{t}=10^{-3},h_{t}=10^{-5})$ takes values close to zero,with convergence order $O(h+h_{t}).$

\begin{example}
We take: $\alpha =\frac{3}{2},\quad a(x,t)=-x^{2}-t,\quad b(x,t)=x-t$,\quad \ 
$c(x)=x+2t,\quad \displaystyle g(x,t)=(4\sqrt{t}+(t+1)^{2}(x^{2}+2t)e^{x},$ $\Phi
(x)=e^{x};\quad $, $\psi (x)=2e^{x},\quad \mu(t)=(t+1)^{2},\quad E(t)=(t+1)^{2}.$\\

The exact analytical solution of this problem is given by $%
v(x,t)=(t+1)^{2}e^{x}.$
\end{example}
\newpage
The tables 7, 8 and 9 show the values of the absolute error.
\begin{center}
$%
\begin{array}{c}
\text{Table 7.\ }h\text{ }\mathbf{=0.1},\text{ }h_{t}=\mathbf{10}^{-2} \\ 
\begin{tabular}{|l|l|}
\hline
$h$ & \ \ \ \ $\mathbf{A.E}$ \\ \hline
$0.1$ & \multicolumn{1}{|c|}{$6.72e-04$} \\ \hline
$0.2$ & \multicolumn{1}{|c|}{$2.47e-03$} \\ \hline
$0.3$ & \multicolumn{1}{|c|}{$2.44e-03$} \\ \hline
$0.4$ & \multicolumn{1}{|c|}{$2.41e-03$} \\ \hline
$0.5$ & \multicolumn{1}{|c|}{$2.39e-03$} \\ \hline
$0.6$ & \multicolumn{1}{|c|}{$2.38e-03$} \\ \hline
$0.7$ & \multicolumn{1}{|c|}{$1.84e-03$} \\ \hline
$0.8$ & \multicolumn{1}{|c|}{$1.40e-02$} \\ \hline
$0.9$ & \multicolumn{1}{|c|}{$2.06e-01$} \\ \hline
\end{tabular}%
\end{array}%
$ \end{center}

\begin{center}
\begin{figure}
\centering
\includegraphics [width=0.9\textwidth]{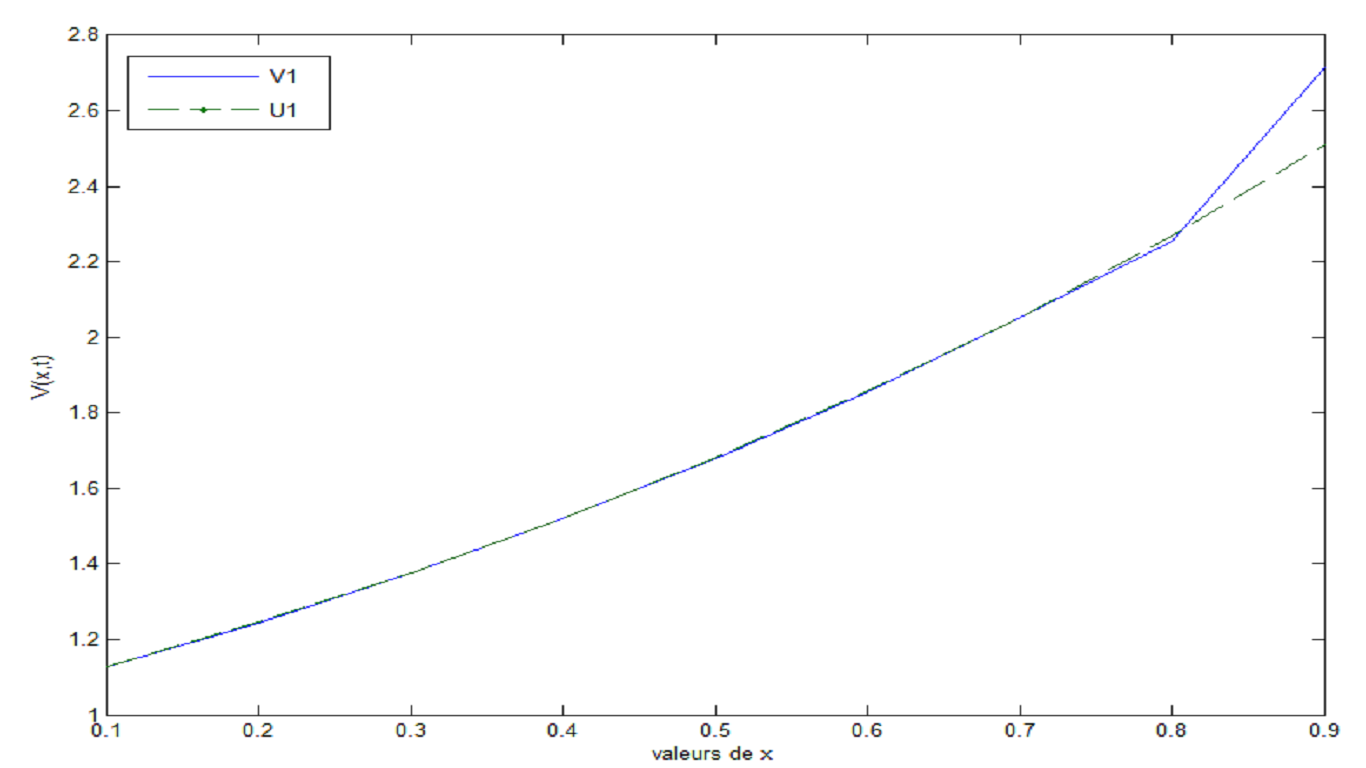} \\
{FIGURE 9. \ $\alpha =1.5$, $h_{t}=10^{-4}$}\label{fig:(9)}
\end{figure}
\end{center}
$%
\begin{array}{c}
\text{Table 8. }h=\mathbf{0.1},\text{ }h_{t}=\mathbf{10}^{-3} \\ 
\begin{tabular}{|l|l|}
\hline
$h$ & $\ \ \ \ \ \ \mathbf{A.E}$ \\ \hline
$0.1$ & \multicolumn{1}{|c|}{$1.22e-05$} \\ \hline
$0.2$ & \multicolumn{1}{|c|}{$4.28e-05$} \\ \hline
$0.3$ & \multicolumn{1}{|c|}{$4.24e-05$} \\ \hline
$0.4$ & \multicolumn{1}{|c|}{$4.20e-05$} \\ \hline
$0.5$ & \multicolumn{1}{|c|}{$4.15e-05$} \\ \hline
$0.6$ & \multicolumn{1}{|c|}{$4.10e-05$} \\ \hline
$0.7$ & \multicolumn{1}{|c|}{$4.03e-05$} \\ \hline
$0.8$ & \multicolumn{1}{|c|}{$4.83e-05$} \\ \hline
$0.9$ & \multicolumn{1}{|c|}{$5.48e-03$} \\ \hline
\end{tabular}%
\end{array}%
$
\begin{figure}
	\centering
	\includegraphics [width=0.9\textwidth]{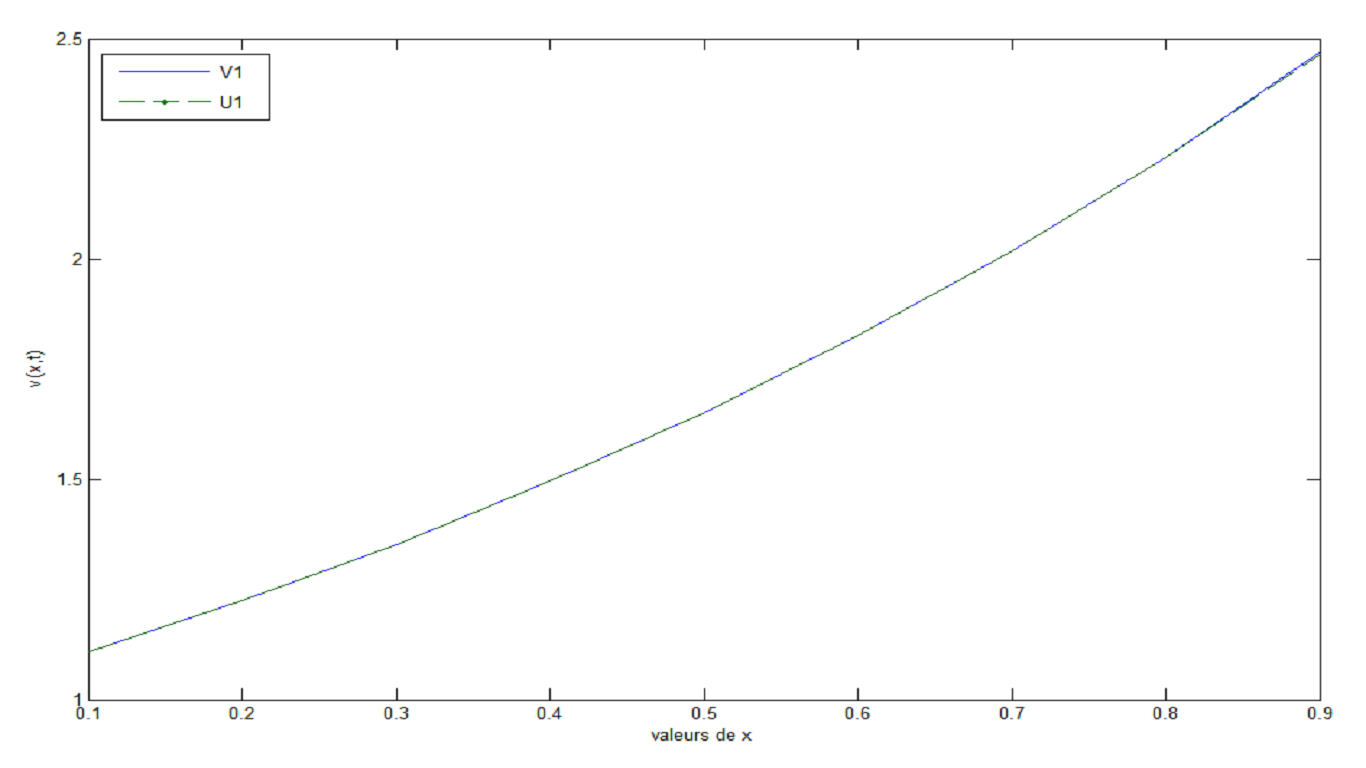} \\
	{FIGURE 10 \ $\alpha =1.5$, $h_{t}=10^{-5}$}\label{fig:(8)}
\end{figure}

\begin{center}
$
\begin{array}{c}
\text{Table 9. }h=0.1,\text{ }h_{t}=\mathbf{10}^{-4} \\ 
\multicolumn{1}{l}{\text{ \ \ \ \ \ \ }%
\begin{tabular}[t]{|r|r|}
\hline
$h$ & $\mathbf{A.E}$ \ \ \ \ \ \  \\ \hline
$0.1$ & $3.75e-07$ \\ \hline
$0.2$ & $1.26e-06$ \\ \hline
$0.3$ & $1.24e-06$ \\ \hline
$0.4$ & $1.25e-06$ \\ \hline
$0.5$ & $1.23e-06$ \\ \hline
$0.6$ & $1.22e-06$ \\ \hline
$0.7$ & $1.20e-06$ \\ \hline
$0.8$ & $1.19e-06$ \\ \hline
$0.9$ & $1.72e-04$ \\ \hline
\end{tabular}%
}%
\end{array}%
$
\end{center}

\begin{figure}
	\centering
	\includegraphics [width=0.9\textwidth]{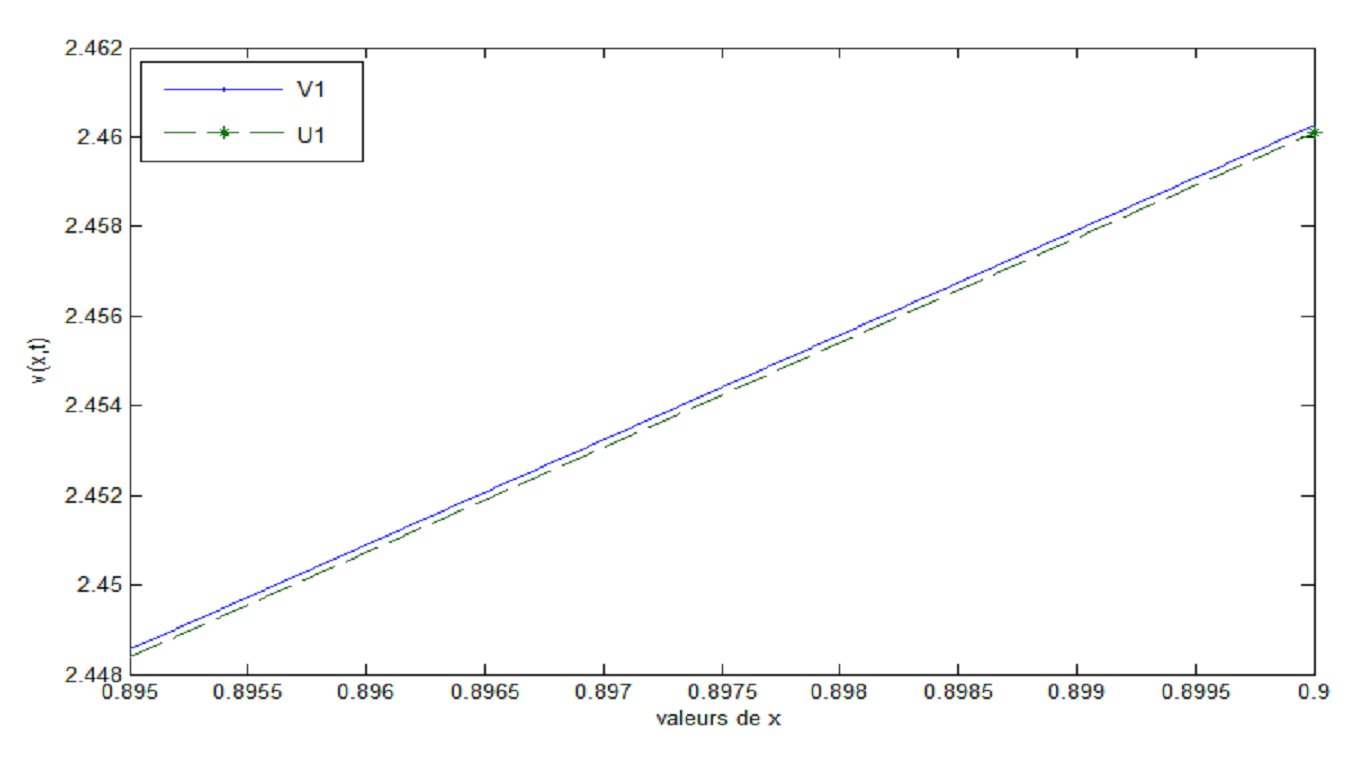} \\
	{FIGURE 11 \ $\alpha =1.5$, $h_{t}=10^{-5}$}\label{fig:(8)}
\end{figure}

In this example we see again for space step $h=0.1$ the absolute error tends
to zero, when the time step $h_{t}$ $(\mathbf{10}^{-2},$ $\mathbf{10}^{-3},$ 
$\mathbf{10}^{-4})$ takes a value close to zero, with convergence order $%
O(h+h_{t}).$

For\textbf{\ }$h=\mathbf{0.01,}$ $\alpha =1.5$%

\begin{figure}
	\begin{subfigure}{.33\textwidth}
		\centering
		\includegraphics[width=3.5cm,height=4.5cm]{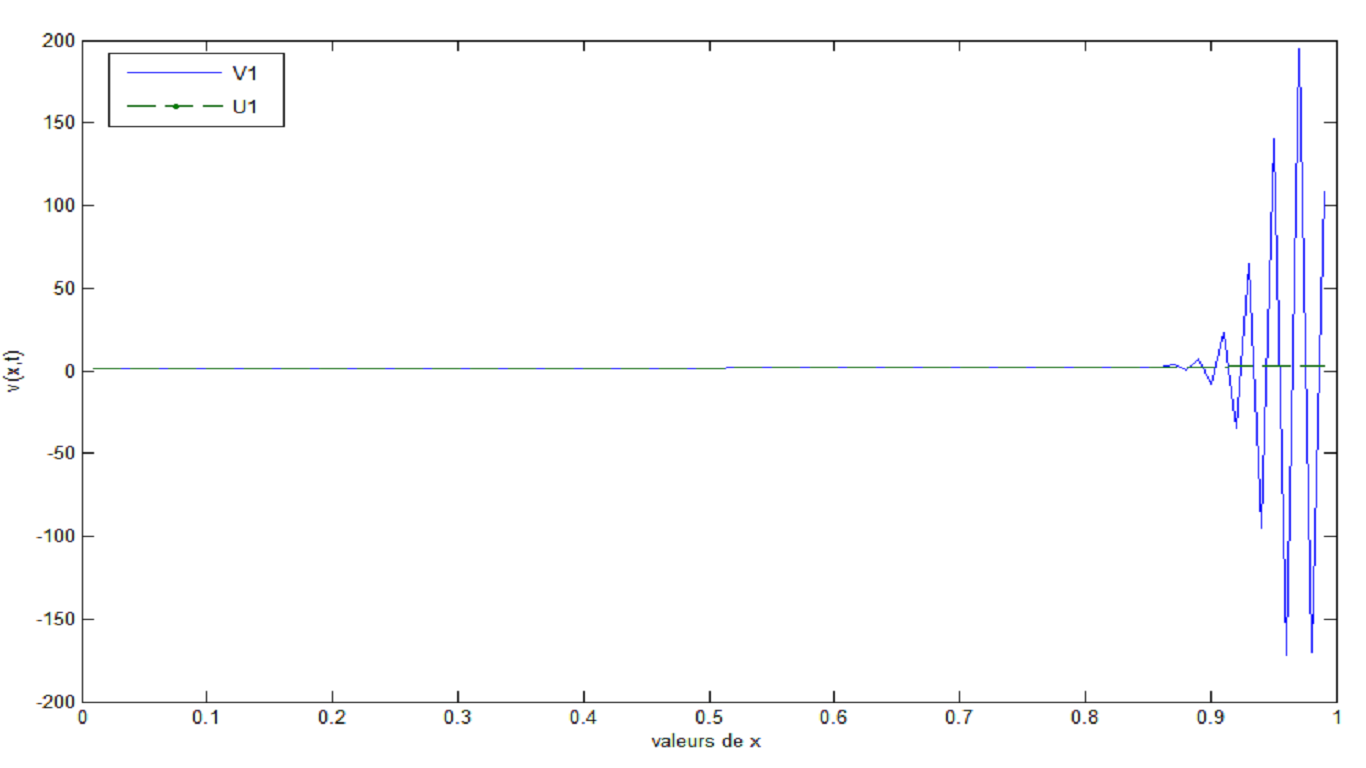}
		\caption{FIGURE 12.\ \ \ \ }$h_{t}=\mathbf{10}^{-2}$\label{fig:(12A)}
	\end{subfigure}\hfill
	\begin{subfigure}{.33\textwidth}
		\centering
		\includegraphics[width=3.5cm,height=4cm]{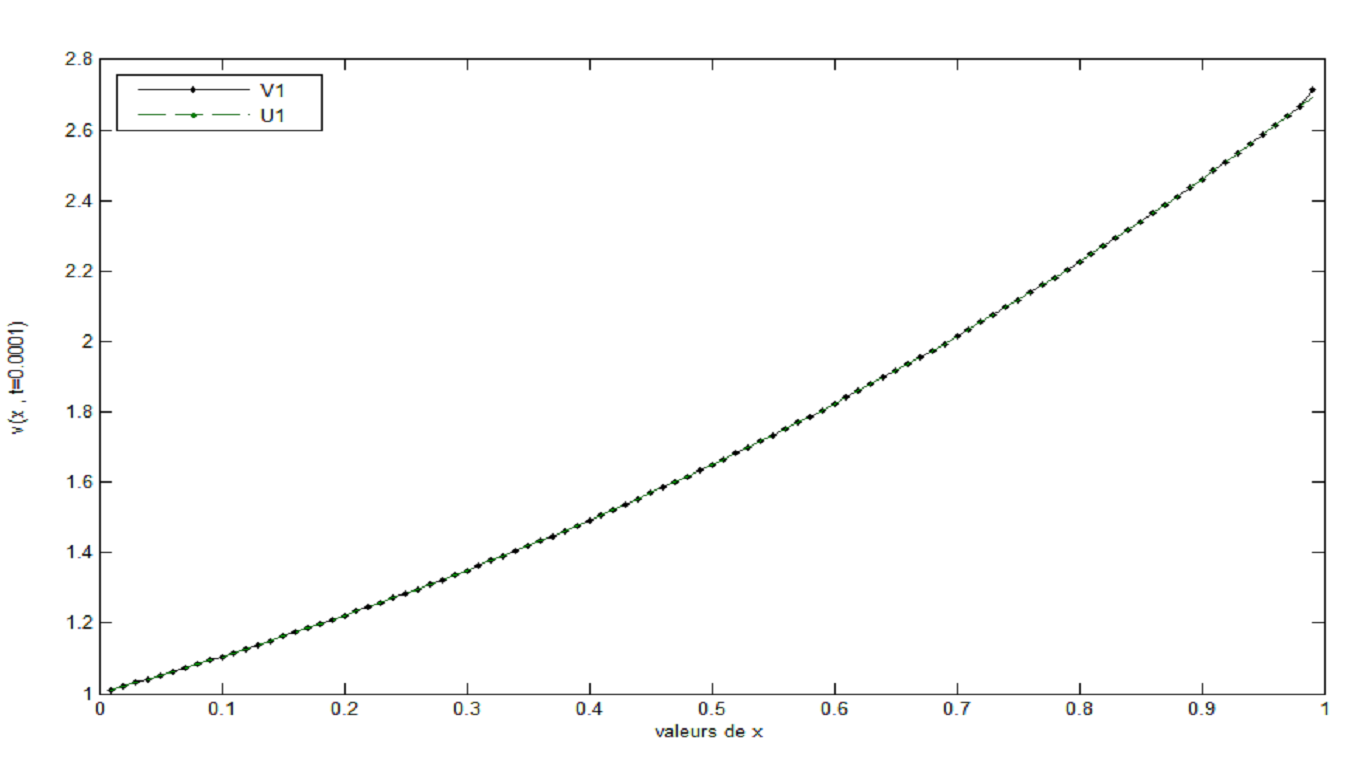}
		\caption{FIGURE 13.\ \ \ \ \ }$h_{t}=\mathbf{10}^{-3}$\label{fig:(12B)}
	\end{subfigure}\hfill
	\begin{subfigure}{.33\textwidth}
		\centering
		\includegraphics[width=3.5cm,height=4cm]{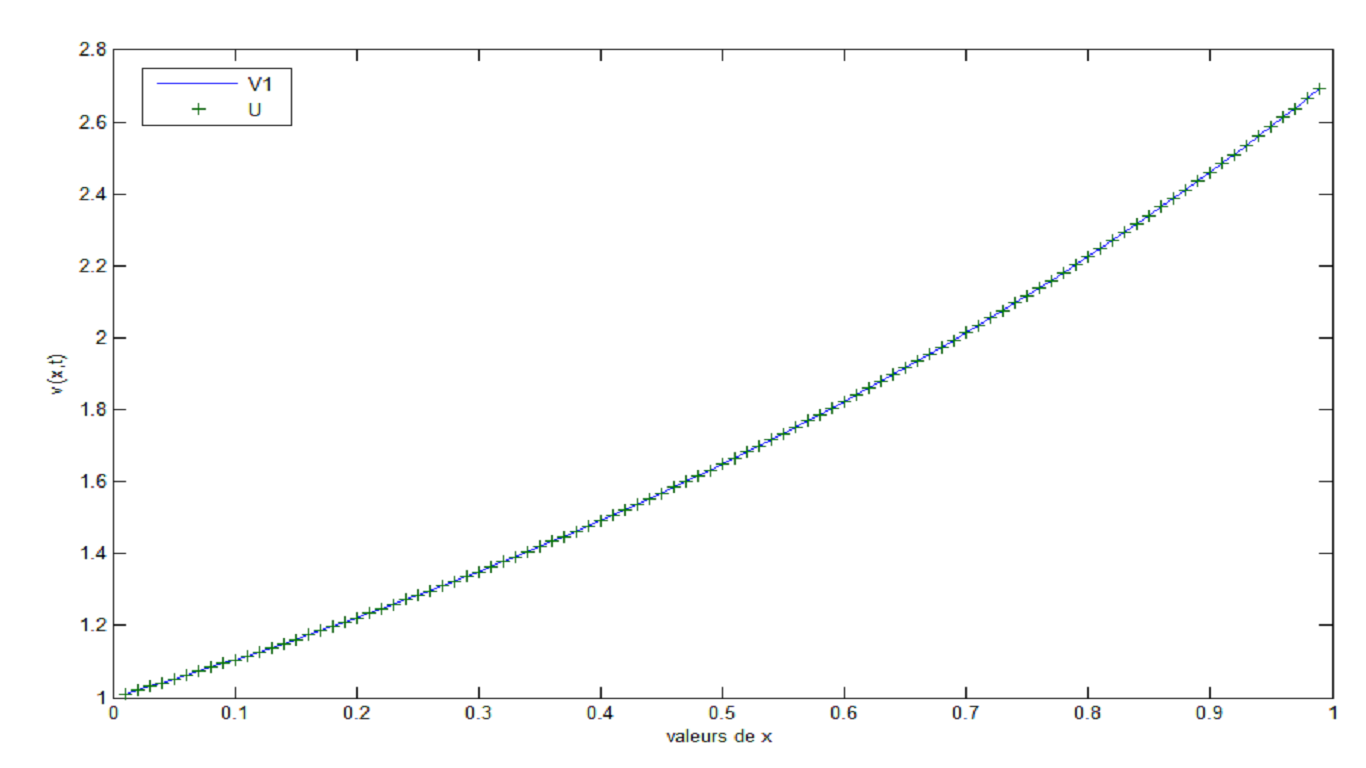}
		\caption{FIGURE 14.\ \ \ \ \ \ }$h_{t}=\mathbf{10}^{-5}$\label{fig:(12C)}
	\end{subfigure}
	{\ Fig. 12\ \ \ \ \ \ $h_{t}=0.001$ \ \ \ \ \ \ \ \ \ \ \ \ \ Fig.
		13\ \ \ \ \  $h_{t}=0.0001$\ \ \ \ \ \ \ \ \ \ \ \ \ \ \ \ Fig. 14\ \ \
		\ }$h_{t}=0.00001$
\end{figure}

The Fig.12 ,13 and 14 show where the space step is fixed at $h=0.01$ and the
time step $h_{t}$ decreases towards zero $(h_{t}=0.001$, $h_{t}=0.0001,$ $%
h_{t}=0.00001),$ the approximate solution $u{{}^1}$ tends to the exact solution $v{{}^1}$, in the case where $h_{t}=0.00001$ we see that the two curves of $u{{}^1}$ and $v{{}^1}$ are almost identical.

\noindent  Table $10$ shows the error norm $\left\Vert E^{k}\right\Vert
_{\infty }$ for defferent value of $\alpha $ defined by%
\begin{equation*}
\left\Vert E^{k}\right\Vert _{\infty }=\limfunc{Max}_{\text{ }1\text{ }\leq 
\text{ }i\text{ }\leq \text{ }N-1}\dsum\limits_{i=1}^{N-1}\left\vert
e_{i}\right\vert ,\text{ where }E^{k}=V\text{ }^{k}-U\text{ }^{k}=\left(
e_{1}^{k},...,e_{N-1}^{k}\right) ^{T}
\end{equation*}

\begin{equation*}
\text{Table }10\text{ },\text{ }h=0.1
\end{equation*}

\begin{center}
\begin{tabular}{|c|c|}
\hline
$\ \ \ \ \ \ \ \ \ \ \ \ \ \ \ h_{t}$ & $\ 
\begin{tabular}{ccc}
$\ \ \ $\ $\ \ 10^{-3}$\ \ \ \ \ \ \  & \ $\ \ \ 10^{-5}$ \ \ \ \ \ \ \ \ \ 
& $10^{-7}$ \ \ \ \ 
\end{tabular}%
$ \\ \hline
$%
\begin{tabular}{c}
$\left\Vert E^{1}\right\Vert _{\infty }$ for
\end{tabular}%
\begin{tabular}{|c|}
\hline
$\alpha =1.2$ \\ \hline
$\alpha =1.4$ \\ \hline
$\alpha =1.6$ \\ \hline
$\alpha =1.8$ \\ \hline
$\alpha =1.9$ \\ \hline
\end{tabular}%
$ & \multicolumn{1}{|l|}{%
\begin{tabular}{|c|c|c|}
\hline
$\ \ \ 9.5736e\ast 10^{-4}$ & $1.3196\ast 10^{-6}\ $ & $5.2768\ast 10^{-9}\ $
\\ \hline
$\ 1.1294\ast 10^{-4}$ & $\ 1.2671\ast 10^{-7}$ & $2.0154\ast 10^{-10}$ \\ 
\hline
$2.3162\ast 10^{-5}$ & $1.2692\ast 10^{-8}$ & $7.9794\ast 10^{-12}$ \\ \hline
$1.53\ast 10^{-4}$ & $1.4449\ast 10^{-9}$ & $3.4062\ast 10^{-13}$ \\ \hline
$4.7963\ast 10^{-6}$ & $6.2306\ast 10^{-10}$ & $8.6153\ast 10^{-14}$ \\ 
\hline
\end{tabular}%
} \\ \hline
\end{tabular}
\end{center}

We see in the table 10, for the space step $h=0.1,$ and for the defferent
values of $\alpha ,$ the error norm tends to zeros when the time step $h_{t}$
takes values close to zeros, with convergence order $O(h+h_{t}).$

\section*{Conclusion}

In this paper, we study a problem with fractional derivatives with boundary
conditions of integral types. The study concerns a Caputo-type
advection-diffusion equation where the fractional order derivative $\alpha $
with respect to time with $1<\alpha <2$. The existence and uniqueness are
proven by the method of energy inequalities. The numerical study of this
problem based on the finite difference method. Applications on certain
examples clearly show that the numerical results obtained are very
satisfactory, where we see the approximate solution $u$ tends to the exact
solution $v$ for the defferent value of $\alpha .$

\bibliographystyle{aaai-named}

\end{document}